\theoremstyle{plain}
\newtheorem{theo}{Theorem}
\newtheorem*{theo*}{Theorem}
\theoremstyle{definition}
\newtheorem{lem}{Lemma}[section]
\newtheorem{defi}{Definition}[section]
\newtheorem{prop}{Proposition}[section]
\newtheorem{cor}{Corollary}[section]
\newtheorem*{note}{Note}
\theoremstyle{remark}
\newtheorem*{ack}{Acknowledgements}
\newtheorem*{rem*}{Remark}
\newtheorem{rem}{Reamrk}[section]
\begin{document}

\title{On commuting  Tonelli Hamiltonians: Autonomous case}
\author{xiaojun cui \,\,\,\,\, Ji Li}
\address{Xiaojun Cui \endgraf
Department of Mathematics,\endgraf Nanjing University,\endgraf
Nanjing, 210093,\endgraf Jiangsu Province, \endgraf People's
Republic of China.} \email{xjohncui@gmail.com,xjohncui@yahoo.com}

\curraddr{Mathematisches Institut, Albert-Ludwigs-Universit\"{a}t of Freiburg, Eckerstrasse 1, 79104, Freiburg im Breisigau, Germany}
\address{Ji Li \endgraf
Department of Mathematics,\endgraf Nanjing University,\endgraf
Nanjing, 210093,\endgraf Jiangsu Province, \endgraf People's
Republic of China.} \email{adailee.hepburn@gmail.com}
\thanks{The first author is supported by National Natural Science Foundation of China
 (Grant 10801071) and  research fellowship for postdoctoral researchers
 from the Alexander von Humboldt Foundation.}

\abstract We show that the Aubry sets, the Ma\~{n}\'{e} sets, Mather's barrier functions are the same for two commuting autonomous Tonelli
Hamiltonians. We also show the quasi-linearity of $\alpha$-functions from the dynamical point of view and the existence of common $C^{1,1}$ critical subsolution for their  associated Hamilton-Jacobi equations.
\endabstract
\maketitle

\section{Introduction}
Let $M$ be a closed, connected $C^{\infty}$ Riemannian manifold. Let $TM$ and $T^{*}M$ be the tangent bundle and
cotangent bundle of $M$, respectively. In local coordinates, we may
express them as
$$TM=\big{\{}(q,\dot{q}): q \in T_qM \big{\}}$$
and
$$T^{*}M=\big{\{}(q,p): p \in T^{*}_qM\big{\}},$$
respectively. Let $pdq$ be the Liouville form.  A $C^2$ function $H:T^{*}M \rightarrow \mathbb{R}$ is
called Tonelli Hamiltonian if $H$ satisfies the following
conditions:

 $\bullet$ $H$ is  fiberwise strictly convex, i.e., the fiberwise Hessian $\frac{\partial ^2 H}{\partial
 p^2}$ is positively definite for every $(q,p) \in T^*M$.

$\bullet$ $H$ has superlinear growth, i.e., $\frac{H(q,p)}{|p|}
\rightarrow +\infty$ as $|p| \rightarrow +\infty$, where $|\cdot |$
is the norm induced by the Riemannian metric on $M$.

For a Tonelli Hamiltonian $H$, the dynamics of the Hamilton flow
$\phi^t_H$ are well understood, thanks to the celebrated Mather
theory \cite{Man},\cite{Mat1},\cite{Mat2} and its weak KAM approach
\cite{Fa3}.

 Let $\{\cdot\}$ be the Poisson bracket. Recall that two Hamiltonians
$H_1,H_2$ are commuting (in involution) if $\{H_1,H_2\}=0$.

In this paper, we restrict ourselves to the relations in Mather theory
between dynamics of two commuting Tonelli Hamiltonians.
 We show that so many things are same for two commuting Tonelli Hamiltonians. As a byproduct, we also show quasi-linearity of Mather's $\alpha$- functions \cite{Vi}, from the view point of dynamics.

For a Tonelli Hamiltonian $H$, let $L_{H}$ be the Lagrangian
associated to $H$ by Legendre transformation, i.e., $$L_H(q,\dot
q)=p \dot q-H(q,p), $$ here $p$ and $\dot q$ are related by $\dot q
=\frac{\partial H(q,p)}{\partial p}.$ Throughout this paper,
$\mathcal{L}_H$ denotes the Legendre transformation from tangent
bundle $TM$ to cotangent bundle $T^*M$, i.e.,
$$\mathcal{L}_H(\dot q)=p \iff \dot q =\frac{\partial H(q,p)}{\partial p}.$$ For each cohomology class $c \in
H^1(M,\mathbb{R})$, Mather's $\alpha$-function is defined as
follows:

$$\alpha_H(c)=-\min_{\mu}\int (L_{H}-\eta) d\mu,$$
where $\eta$ is a smooth (throughout this paper, smoothness  means that $C^r, r \geq 2$) closed 1-form on $M$ with $[\eta]=c$ (throughout this article,
$[ \cdot]$ denotes de-Rham cohomology class of a closed 1-form); the
minimum is taken over all invariant (under the Euler-Lagrange flow
$\phi^t_{L_H}$ of $L_H$) Borel probability measures. We say that an
invariant Borel probability measure $\mu$ is $c$-minimal if $\int
(L_H- \eta) d \mu=-\alpha_H(c)$, here $[\eta]=c$. Let
$\mathfrak{M}_c$ be the set of all c-minimal measures. Let Mather
set
$$\dot{M}_{H,c}=\dot{M}_{L_H,c}=\dot{M}_{c}= \text{closure of }
\big{\{}\cup_{\mu \in \mathfrak{M}_c} \text{ support of }
\mu\big{\}}.$$ Thus, $\dot{M}_c \subset TM$.  The set
$$\stackrel{\scriptscriptstyle \ast}{M}_{H,c}=
\stackrel{\scriptscriptstyle \ast}{M}_{L_H,c}=
\stackrel{\scriptscriptstyle \ast}{M}_{c}=
\mathcal{L}_H\dot{M}_{H,c}$$ is also called Mather set. Throughout this paper, let $\pi$ be the projection of $T^{*}M$ or $TM$ along the associated fibers onto $M$, according to the circumstance.  The
projection of $\stackrel{\scriptscriptstyle \ast}{M}_{H,c}$ (or
$\dot{M}_{H,c}$, equivalently) into $M$ is called projected Mather
set. We denote the projected Mather set by
$$M_{H,c}=M_{L_H,c}=M_c.$$

For any $\mathbb{R} \ni T >0$ and any closed 1-form, let
$$h^T_{H,\eta}(q_1,q_2)=
\inf_{\gamma} \int^T_0
\big{(}L_H-\eta+\alpha_H([\eta])\big{)}(\gamma(t),\dot{\gamma}(t))dt,$$
where minimum is taken over all absolutely continuous curve $\gamma:[0,T]
\rightarrow M$  with $\gamma(0)=q_1,\gamma(T)=q_2$. Let
$$h_{H,\eta}(q_1,q_2)=\lim_{T \rightarrow +\infty}h^T_{H,\eta}(q_1,q_2).$$
Note that the convergence of the limit is nontrivial, it follows
form the convergence of Lax-Oleinik semigroup in the
time-independent case \cite{Fa2}. Let
$$\rho_c(q_1,q_2)=\rho_{L_H,c}(q_1,q_2)=\rho_{H,c}(q_1,q_2)=h_{H,\eta}(q_1,q_2)+h_{H,\eta}(q_2,q_1),$$
 here $\eta$ is a smooth closed 1-form on $M$ with
$[\eta]=c.$ Now projected Aubry set $A_{H,c}=\{q \in M: \rho_{H,c}(q,q)=0\}$. Then $\rho_c$ is a pseudo-metric on $A_{H,c}$. Now define an equivalence relation $\sim_{\rho_{H,c}}$ on $A_{H,c}$ by $q_1 \sim_{\rho_{H,c}} q_2$ iff $\rho_{H,c}(q_1,q_2)=0$. Now let quotient Aubry set $(\bar{A}_{H,c}, \rho_{H,c})$ be the quotient metric space of $A_{H,c}$ under the relation $\sim_{\rho_{H,c}}$.

 We say that an absolutely continuous curve
$\gamma:\mathbb{R} \rightarrow M$ is a $c$-minimizer, if for any
interval $[a,b]$ and any absolutely continuous curve $\gamma_1:[a,b]
\rightarrow M$ such that $\gamma_1(a)=\gamma(a)$ and
$\gamma_1(b)=\gamma(b)$, we have
$$\int^b_a(L_H-\eta+\alpha_H(c))(\gamma(t),\dot{\gamma}(t))dt \leq
\int^b_a(L_H-\eta+\alpha_H(c))(\gamma_1(t),\dot{\gamma}_1(t))dt,$$
where $\eta$ is a smooth closed 1-form on $M$ such that $[\eta]=c$.
We define Ma\~{n}\'{e} set
$$\dot{N}_{H,c}=\dot{N}_{L,c}=\dot{N}_c=\cup \big{\{}(\gamma(t),
\dot{\gamma}(t)):\gamma \text{ is a c-minimizer}\big{\}}.$$ Thus
$\dot{N}_c \subset TM$.  Let
$$\stackrel{\scriptscriptstyle \ast}{N}_{H,c}=
\stackrel{\scriptscriptstyle \ast}{N}_{L_H,c}=
\stackrel{\scriptscriptstyle \ast}{N}_{c}=
\mathcal{L}_H\dot{N}_{H,c},$$
and it is also called Ma\~{n}\'{e} set. Then, $\stackrel{\scriptscriptstyle
\ast}{N}_{c} \subset T^*M$.

Let $\gamma: \mathbb{R} \rightarrow M$ be a $c$-minimizer. Let $q$
be in $\alpha$-limit set and $q^{\prime}$ be in $\omega$-limit set
of $\gamma$. If $\rho_{H,c}(q,q^{\prime})=0$, we say that $\gamma$
is a regular c-minimizer. We define Aubry set
$$\dot{A}_{H,c}=\dot{A}_{L_H,c}=\dot{A}_c=\cup \big{\{}(\gamma(t),
\dot{\gamma}(t)):\gamma \text{ is a regular c-minimizer}\big{\}}.$$
Clearly, $\dot{A}_c \subseteq \dot{N}_c$. Let
$$\stackrel{\scriptscriptstyle \ast}{A}_{H,c}=
\stackrel{\scriptscriptstyle \ast}{A}_{L_H,c}=
\stackrel{\scriptscriptstyle \ast}{A}_{c}=
\mathcal{L}_H\dot{A}_{H,c},$$
and it is also called Aubry set.

Let projected Aubry set $$A_{H,c}=A_{L_H,c}=A_c$$ and projected Ma\~{n}\'{e} set
 $$N_{H,c}=N_{L_H,c}=N_c$$ be the projections of
$$\stackrel{\scriptscriptstyle \ast}{A}_{H,c}=
\stackrel{\scriptscriptstyle \ast}{A}_{L_H,c}=
\stackrel{\scriptscriptstyle \ast}{A}_{c} \text{ and
}\stackrel{\scriptscriptstyle \ast}{N}_{H,c}=
\stackrel{\scriptscriptstyle \ast}{N}_{L_H,c}=
\stackrel{\scriptscriptstyle \ast}{N}_{c}$$ into $M$ respectively.

We have the following inclusions:
$$\stackrel{\scriptscriptstyle \ast}{M}_{H,c} \subseteq \stackrel{\scriptscriptstyle \ast}
{A}_{H,c}\subseteq \stackrel{\scriptscriptstyle \ast}{N}_{H,c}.$$

 Let $\eta$ be a smooth closed
1-form on $M$. We introduce two semigroups of nonlinear operators
$(T^-_{H,\eta,t})_{t \geq 0}$ and $(T^+_{H,\eta,t})_{t \geq 0}$
respectively. These semigroups are the so-called Lax-Oleinik
semigroups. To define them, let us fix $u \in C^0(M,\mathbb{R})$ and
$t \geq 0$. For $q \in M$, we set
$$T^-_{H,\eta,t}u(q)=\inf_{\gamma}\Big{\{}u(\gamma(0))+\int^t_0 (L_H-\eta+\alpha_H([\eta]))
(\gamma(s),\dot{\gamma}(s))ds\Big{\}},$$ where the infimum is taken
over all  absolutely continuous curve $\gamma:[0,t]\rightarrow M$
such that $\gamma(t)=q$. Also, for $q \in M$, we set
$$T^+_{H,\eta,t}u(q)=\sup_{\gamma}\Big{\{}u(\gamma(t))-\int^t_0 (L_H-\eta+\alpha_H([\eta]))
(\gamma(s),\dot{\gamma}(s))ds\Big{\}},$$ where the supremum is taken
over all  absolutely continuous curve $\gamma:[0,t]\rightarrow M$
such that $\gamma(0)=q$. A function $u$ is a forward (resp.
backward) weak KAM solution of Hamilton-Jacobi equation
$$H(q,\eta+d_qu)=\alpha_H([\eta])$$
if $T^+_{H,\eta,t}u=u$ (resp.
 $T^-_{H,\eta,t}u=u$) for any $t \geq 0$.
Let $\mathcal{S}^{+}_H({\eta})$ (resp. $\mathcal{S}^{-}_H({\eta}))$
be the set of all forward (resp. backward) weak KAM solutions of
Hamiltonian Jacobi equation
$$H(q, \eta+d_q u)=\alpha_H([\eta]),$$
where $\eta$ is a  smooth closed 1-form on $M$. By weak KAM theory \cite{Fa3}, we have
$$\lim_{t \rightarrow \infty}T^+_{H,\eta,t}u \in \mathcal{S}^{+}_H({\eta}))$$
and
$$\lim_{t \rightarrow \infty}T^-_{H,\eta,t}u \in \mathcal{S}^{-}_H({\eta}))$$
for any $u \in C^0(M, \mathbb{R})$.
Now we can state our main results as follows:

\begin{theo}
Let $H_1,H_2$ be  two Tonelli Hamiltonians $H_1,H_2$. If
$\{H_1,H_2\}=0$, then
$$T^-_{H_1,\eta,s}T^-_{H_2,\eta,r}u=T^-_{H_2,\eta,
r}T^-_{H_1,\eta,s}u, \,\,\,\,\,\,\,\,
T^+_{H_1,\eta,s}T^+_{H_2,\eta,r}u
=T^+_{H_2,\eta,r}T^+_{H_1,\eta,s}u$$ for any $u \in
C^0(M,\mathbb{R})$, any smooth closed 1-form $\eta$ on $M$ and
$\mathbb{R} \ni s,r \geq 0$.
\end{theo}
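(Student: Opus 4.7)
The plan is to prove the commutation for the semigroup $T^-$ (the $T^+$ case is symmetric, replacing infima by suprema). By the symmetry between the two sides, it suffices to establish the inequality $T^-_{H_2,\eta,r}T^-_{H_1,\eta,s}u(q) \le T^-_{H_1,\eta,s}T^-_{H_2,\eta,r}u(q)$ for each $q\in M$. First I would use Tonelli's existence theorem to exhibit a minimizing pair for the right-hand side: an admissible concatenation $\gamma_2:[0,r]\to M$ followed by $\gamma_1:[0,s]\to M$ with $\gamma_2(r)=\gamma_1(0)=:q_0$ and $\gamma_1(s)=q$, each a $C^2$ Euler-Lagrange extremal of the respective $L_{H_i}$. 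A first-variation argument at the joining point $q_0$, in which the $-\int\eta$ contributions reduce to cancelling boundary terms because $\eta$ is closed, gives the Weierstrass-Erdmann matching $\mathcal L_{H_2}(\gamma_2(r),\dot\gamma_2(r))=\mathcal L_{H_1}(\gamma_1(0),\dot\gamma_1(0))=:p^\ast$.

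Next I would swap the order of the flows. The Legendre lifts of the two pieces are the Hamiltonian orbits $\Gamma_1(t)=\phi^t_{H_1}(q_0,p^\ast)$ on $[0,s]$ and $\Gamma_2(t)=\phi^{t-r}_{H_2}(q_0,p^\ast)$ on $[0,r]$. Since $\{H_1,H_2\}=0$ implies that $\phi^{\cdot}_{H_1}$ and $\phi^{\cdot}_{H_2}$ commute, the phase-space curves $\tilde\Gamma_1(t):=\phi^t_{H_1}(\Gamma_2(0))$ on $[0,s]$ and $\tilde\Gamma_2(t):=\phi^{t-r}_{H_2}(\Gamma_1(s))$ on $[0,r]$ satisfy $\tilde\Gamma_1(s)=\tilde\Gamma_2(0)$ and together form a continuous phase-space path from $\Gamma_2(0)$ to $\Gamma_1(s)$. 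Setting $\tilde\gamma_i:=\pi\tilde\Gamma_i$ produces an admissible pair for the concatenation defining $T^-_{H_2,\eta,r}T^-_{H_1,\eta,s}u(q)$, with $\tilde\gamma_1(0)=\gamma_2(0)$, so the initial $u$-value is unchanged.

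The heart of the argument is the equality of total actions of the two concatenations. Writing $L_{H_i}(\gamma,\dot\gamma)=p\dot\gamma-H_i(\gamma,p)$ and parametrizing the 2-chain $F(\sigma,\tau):=\phi^\sigma_{H_1}\phi^\tau_{H_2}(\Gamma_2(0))$ for $(\sigma,\tau)\in[0,s]\times[0,r]$, whose boundary is $\Gamma_2+\Gamma_1-\tilde\Gamma_2-\tilde\Gamma_1$, the commutation $\{H_1,H_2\}=0$ forces $H_1$ and $H_2$ to be constant throughout the image of $F$, so the $-\int H_i\,dt$ contributions to the two actions agree. The remaining $\int p\,dq$ difference is, by Stokes' theorem, $\iint F^\ast(dp\wedge dq)$; since $F_\ast\partial_\sigma=X_{H_1}$ and $F_\ast\partial_\tau=X_{H_2}$, the integrand equals $\pm\{H_1,H_2\}\,d\sigma\,d\tau=0$. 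The $-\int\eta$ contributions differ by the integral of $\eta$ over the projected loop $\pi\circ\partial F$, which bounds the 2-chain $\pi\circ F$; Stokes and closedness of $\eta$ again give $0$. The $\alpha_{H_i}$-constants cancel termwise. Hence the two actions coincide, which yields the desired inequality.

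I expect the main obstacle to be the junction analysis in the first step: producing a joint minimizer of the nested variational problem (whose outer datum $T^-_{H_2,\eta,r}u$ is only Lipschitz) and verifying the momentum matching in this generality. This is handled by standard Tonelli/weak-KAM compactness together with the first-variation argument, once one uses that closedness of $\eta$ removes any obstruction from its boundary contribution. With matching in hand, the symplectic swap and the Stokes calculation at the heart of the proof are routine consequences of $\{H_1,H_2\}=0$.
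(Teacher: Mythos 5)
Your proposal is correct and follows essentially the same route as the paper: existence of minimizing concatenations, a first-variation (Weierstrass--Erdmann) matching of momenta at the junction using closedness of $\eta$, swapping the order of the two commuting Hamiltonian flows, and equality of actions via Stokes' theorem applied to the isotropic $2$-chain swept out by $\phi^\sigma_{H_1}\phi^\tau_{H_2}$, on which $dp\wedge dq$ (and the $\eta$-contribution) vanish because $\{H_1,H_2\}=0$. The only cosmetic difference is that the paper bundles the $\eta$-term into the $1$-form $(p-\eta)dq$ on $T^*M$ before applying Stokes, while you treat the $p\,dq$ and $\eta$ contributions separately; the content is identical.
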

\begin{rem}
When we complete writing down this paper, we learn to know that the result
in Theorem 1 has appeared in \cite{BT}. But the proof here is a more directly variational discussion, which is very different from \cite{BT}.
\end{rem}

\begin{theo}
Let $H_1, H_2$ be two Tonelli Hamiltonians. If $\{H_1,H_2\}=0$, then
$\mathcal{S}^{+}_{H_1}({\eta})=\mathcal{S}^{+}_{H_2}({\eta})$ and
$\mathcal{S}^{-}_{H_1}({\eta})=\mathcal{S}^{-}_{H_2}({\eta})$, for
any smooth closed 1-form $\eta$.
\end{theo}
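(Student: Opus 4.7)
The plan is to derive Theorem 2 from Theorem 1. By symmetry it suffices to prove the inclusion $\mathcal{S}^-_{H_1}(\eta)\subseteq\mathcal{S}^-_{H_2}(\eta)$; the reverse inclusion and the $\mathcal{S}^+$ case are analogous, using the $T^+$-commutation already contained in Theorem 1. Fix $u\in\mathcal{S}^-_{H_1}(\eta)$ and set $w_r:=T^-_{H_2,\eta,r}u$; the goal is $w_r=u$ for every $r\geq 0$, which says exactly that $u\in\mathcal{S}^-_{H_2}(\eta)$.

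The first step is invariance: a direct application of Theorem 1 gives
\[
T^-_{H_1,\eta,s}w_r=T^-_{H_1,\eta,s}T^-_{H_2,\eta,r}u=T^-_{H_2,\eta,r}T^-_{H_1,\eta,s}u=T^-_{H_2,\eta,r}u=w_r
\]
for every $s\geq 0$, so the whole orbit $\{w_r\}_{r\geq 0}$ remains inside $\mathcal{S}^-_{H_1}(\eta)$. Combined with the convergence $T^-_{H_2,\eta,r}u\to v\in\mathcal{S}^-_{H_2}(\eta)$ recalled just before the theorem statement, and using that $\mathcal{S}^-_{H_1}(\eta)$ is closed in the uniform topology, this already produces a common weak KAM solution $v\in\mathcal{S}^-_{H_1}(\eta)\cap\mathcal{S}^-_{H_2}(\eta)$.

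The remaining task, which is the main obstacle, is to upgrade this to $w_r=u$ for every $r$ (equivalently $v=u$). I would proceed in three sub-steps. First, show that the projected Aubry set $A_{H_1,c}$, with $c:=[\eta]$, is invariant under $\phi^t_{H_2}$; this is a consequence of $\{H_1,H_2\}=0$, since any $\phi^t_{H_1}$-invariant minimizing measure is automatically $\phi^t_{H_2}$-invariant, and the projected Aubry set is intrinsic to the minimizing calibration structure. Second, verify $w_r(q)=u(q)$ for every $q\in A_{H_1,c}$ by transporting a backward $H_1$-calibrated curve ending at $q$ via the $H_2$-flow to obtain a competitor (and, in fact, an exact minimizer) for the infimum defining $T^-_{H_2,\eta,r}u(q)$; this step uses the identity $H_2(q,\eta+du(q))=\alpha_{H_2}(c)$ on the Aubry graph of $u$, derived from the Mather variational characterization applied to the invariant measures produced in the first sub-step. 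Third, invoke Fathi's uniqueness theorem, which asserts that two elements of $\mathcal{S}^-_{H_1}(\eta)$ that coincide on $A_{H_1,c}$ must agree on all of $M$; this forces $w_r=u$ everywhere. The genuinely delicate part is the second sub-step, as it requires matching calibrations for two different actions, and the key tool there is the commutativity $\phi^t_{H_1}\circ\phi^s_{H_2}=\phi^s_{H_2}\circ\phi^t_{H_1}$ of the Hamiltonian flows, which lets one slide along $\phi^s_{H_2}$ without leaving the family of $H_1$-calibrated orbits.
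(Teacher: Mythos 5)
Your opening step is sound and coincides with the paper's Proposition 2.1: by Theorem 1 the whole family $w_r=T^-_{H_2,\eta,r}u$ stays in $\mathcal{S}^-_{H_1}(\eta)$, and letting $r\to\infty$ produces a common solution $v\in\mathcal{S}^-_{H_1}(\eta)\cap\mathcal{S}^-_{H_2}(\eta)$. The gap is in the upgrade to $w_r=u$. Transporting a backward $H_1$-calibrated curve by the $H_2$-flow can only furnish a \emph{competitor} in the infimum defining $T^-_{H_2,\eta,r}u(q)$, hence only the one-sided estimate $w_r(q)\le u(q)$ on $A_{H_1,c}$. The reverse inequality $w_r\ge u$ is equivalent (see Lemma 7.1 of the paper) to $u$ being a critical subsolution of $H_2(q,\eta+d_qu)=\alpha_{H_2}([\eta])$, i.e.\ to $H_2(q,\eta_q+d_qu)\le\alpha_{H_2}([\eta])$ at almost every point of $M$, not merely on the Aubry set; nothing in your outline establishes this, and it is exactly the heart of the matter. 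The paper proves the stronger a.e.\ identity $H_2(q,\eta_q+d_qu)=\alpha_{H_2}([\eta])$ by following, from each differentiability point $q$, the backward $H_1$-calibrated orbit through $(q,\eta_q+d_qu)$: $H_2$ is constant along it since $\{H_1,H_2\}=0$, and its limit set lies in $\stackrel{\scriptscriptstyle \ast}{A}_{H_1,[\eta]}$, where $H_2=\alpha_{H_2}([\eta])$ by Propositions 2.1--2.2; it then concludes that $u$ is a viscosity solution of the $H_2$ equation (subsolution because the identity holds a.e.\ for a Lipschitz function, supersolution because $u$ is semi-concave, so $D_-u(q)\ne\emptyset$ forces differentiability), hence a backward weak KAM solution for $H_2$ — no uniqueness theorem on the Aubry set is needed.

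Two further points. Your sub-step 1 (invariance of $A_{H_1,c}$, or of its lift, under $\phi^t_{H_2}$) is asserted, not proved: Sorrentino-type results give invariance of the \emph{Mather} set, and ``the projected Aubry set is intrinsic to the minimizing calibration structure'' is not an argument. In the paper this invariance-type statement is part of Proposition 4.1/Theorem 4, which are deduced \emph{from} Theorem 2, so importing it here risks circularity — yet you do need it (or at least that the $\phi^t_{H_2}$-orbit of $(q,\eta_q+d_qu)$ stays in the graph of $\eta+du$) even for your one-sided calibration computation. Likewise, $H_2=\alpha_{H_2}([\eta])$ on the Aubry graph does not follow from ``the Mather variational characterization'' (Mather set $\ne$ Aubry set in general); it does follow cleanly from the common solution $v$ you already constructed, via $\stackrel{\scriptscriptstyle \ast}{A}_{H_1,[\eta]}\subset\overline{\eta+dv}$ and $v\in\mathcal{S}^-_{H_2}(\eta)$, which is precisely the paper's Proposition 2.2. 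Finally, note the paper handles $\mathcal{S}^+$ not by a direct $T^+$ analogue but by passing to the symmetric Hamiltonians $\check H_i$, using $u\in\mathcal{S}^+_H(\eta)\iff -u\in\mathcal{S}^-_{\check H}(\eta)$; your ``analogous'' claim there is plausible but would need the forward counterparts of all the ingredients above.
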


Now we recall the definitions of barrier functions \cite{Mat2}. Let
$\eta$ be a smooth closed 1-form with $[\eta]=c$, then the first
barrier function
$$B_{H,c}(q)=h_{H,\eta}(q,q);$$
 the second barrier function
$$b_{H,c}(q)=\min_{\xi,\zeta \in
A_{H,c}}\{h_{H,\eta}(\xi, q)+h_{H,\eta}(q,\zeta)
-h_{H,\eta}(\xi,\zeta)\}.$$ Clearly,  $B_{H,c}$ and $b_{H,c}$ are
independent of the choice of closed 1-form $\eta$ with $[\eta]=c$.
\begin{theo}
Let $H_1, H_2$ be two Tonelli Hamiltonians. If $\{H_1,H_2\}=0$, then
Mather's barrier functions $B_{H_1,c}(q)=B_{H_2,c}(q)$ and
$b_{H_1,c}(q)=b_{H_2,c}(q)$, for any cohomology class $c \in H^1(M,
\mathbb{R})$.
\end{theo}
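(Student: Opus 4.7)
My plan is to reduce the equality of both barrier functions to two ingredients: the coincidence of weak KAM solution spaces $\mathcal{S}^\pm_{H_1}(\eta)=\mathcal{S}^\pm_{H_2}(\eta)$ (Theorem 2), and the standard weak KAM identification of the Peierls barrier in terms of those solution spaces when one of its arguments lies in the projected Aubry set. The pivotal intermediate step is to show $A_{H_1,c}=A_{H_2,c}$. For this I would invoke Fathi's intrinsic description: $A_{H,c}$ is the intersection, over all pairs $(u,v)\in\mathcal{S}^-_H(\eta)\times\mathcal{S}^+_H(\eta)$ with $u\geq v$ on $M$, of the coincidence sets $\{q\in M:u(q)=v(q)\}$. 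Since the condition $u\geq v$ depends only on the pair, Theorem 2 forces the family of admissible pairs to be the same for $H_1$ and $H_2$, so the two intersections coincide; denote the common projected Aubry set by $A$. (The identity $\alpha_{H_1}(c)=\alpha_{H_2}(c)$ drops out en route by evaluating $H_i(q,\eta+d_qu)=\alpha_{H_i}(c)$ at a common differentiability point of any shared $u\in\mathcal{S}^-_{H_1}(\eta)=\mathcal{S}^-_{H_2}(\eta)$.)

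Next I would prove $h_{H_1,\eta}(\xi,q)=h_{H_2,\eta}(\xi,q)$ and $h_{H_1,\eta}(q,\xi)=h_{H_2,\eta}(q,\xi)$ for every $\xi\in A$ and every $q\in M$. For fixed $\xi\in A_{H,c}$, the function $q\mapsto h_{H,\eta}(\xi,q)$ lies in $\mathcal{S}^-_H(\eta)$ and is the \emph{maximal} such element taking value $0$ at $\xi$, because every $w\in\mathcal{S}^-_H(\eta)$ satisfies the domination inequality $w(q)-w(\xi)\leq h_{H,\eta}(\xi,q)$. Since $\xi$ lies in the common $A$ and $\mathcal{S}^-_{H_1}(\eta)=\mathcal{S}^-_{H_2}(\eta)$, these two maximizers must agree; the dual argument with $\mathcal{S}^+_H(\eta)$ handles the second slot. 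This already yields $b_{H_1,c}(q)=b_{H_2,c}(q)$: every term in
\[
b_{H,c}(q)=\min_{\xi,\zeta\in A}\bigl\{h_{H,\eta}(\xi,q)+h_{H,\eta}(q,\zeta)-h_{H,\eta}(\xi,\zeta)\bigr\}
\]
involves at least one argument in $A$ and therefore agrees for $H_1$ and $H_2$.

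For the first barrier I would use the representation
\[
B_{H,c}(q)=h_{H,\eta}(q,q)=\min_{\xi\in A}\bigl\{h_{H,\eta}(q,\xi)+h_{H,\eta}(\xi,q)\bigr\}.
\]
The inequality $\leq$ is subadditivity; the reverse relies on the classical fact that any sequence of nearly minimizing closed loops at $q$, of growing period $T$, must visit every neighborhood of $A$. Breaking such a loop at a point $\xi_T$ close to $A$, passing to a subsequential limit $\xi\in A$, and invoking continuity of $h_{H,\eta}$ together with $h^T_{H,\eta}\to h_{H,\eta}$, one obtains the matching upper bound. Once this representation is in place, the previous step makes the right-hand side the same for $H_1$ and $H_2$, giving $B_{H_1,c}(q)=B_{H_2,c}(q)$.

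The hardest technical point is the representation for the first barrier when $q\notin A$: the cut point $\xi_T$ must be chosen so that both pieces of the broken loop are long enough for their actions to converge to the Peierls barriers $h_{H,\eta}(q,\xi)$ and $h_{H,\eta}(\xi,q)$, which in turn rests on the standard but delicate recurrence of minimizers near the Aubry set. The Aubry-set equality of Step 1 is the other subtle ingredient; an alternative, more dynamical, route to it would exploit the common $C^{1,1}$ critical subsolution announced in the abstract, whose calibrated curves are trapped in both Aubry sets simultaneously.
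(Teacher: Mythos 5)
Your reduction has a genuine gap at its pivot, the claim $A_{H_1,c}=A_{H_2,c}$. The characterization you invoke --- that $A_{H,c}$ is the intersection of the coincidence sets $\{u=v\}$ over \emph{all} pairs $(u,v)\in\mathcal{S}^-_H(\eta)\times\mathcal{S}^+_H(\eta)$ with $u\geq v$ on $M$ --- is false: since $\mathcal{S}^+_H(\eta)$ is stable under subtracting constants, the pair $(u,\,u_+-1)$ (with $u_+$ conjugate to $u$) satisfies $u\geq u_+-1$ with strict inequality everywhere, so its coincidence set is empty and your intersection is empty. Fathi's actual characterization runs over \emph{conjugate} pairs, i.e.\ pairs with $u_-=u_+$ on the projected Mather set $M_{H,c}$, and this relation is defined through the Mather set of the specific Hamiltonian. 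So the $H$-independence you rely on (``the condition depends only on the pair'') is exactly the nontrivial point, and it does not follow from Theorem 2 alone. This is precisely what the paper spends Propositions 4.1 and 4.2 on: one first shows $\stackrel{\scriptscriptstyle\ast}{M}_{H_1,c}\subseteq\stackrel{\scriptscriptstyle\ast}{A}_{H_2,c}$ (using Sorrentino's invariance of the Mather set under $\phi^t_{H_2}$, the calibration argument showing $\pi\circ\phi^t_{H_2}(q_0,p_0)$ is a $c$-minimizer for $L_{H_2}$, and Mather's estimate $\rho_{H_1,c}(q_0,q_1)\leq C\,d(q_0,q_1)^2$ to kill the pseudo-distance between $\alpha$- and $\omega$-limit points), and only then concludes $u_-\sim_{H_1}u_+\iff u_-\sim_{H_2}u_+$, from which the equality of (projected) Aubry sets and of the barriers follows. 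Without some substitute for this step your proof does not close. A second, smaller error: the parenthetical claim $\alpha_{H_1}(c)=\alpha_{H_2}(c)$ is wrong (take $H_2=H_1+1$, which commutes with $H_1$); evaluating the two Hamilton--Jacobi equations at a common differentiability point of a shared solution gives two statements about two different functions, not an identity between the critical values. Fortunately you never use it, since $h_{H_i,\eta}$ already contains its own $\alpha_{H_i}([\eta])$, but it should be deleted.

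The remainder of your plan is sound and is a genuinely different route from the paper's, \emph{granted} the Aubry-set equality and the conjugacy equivalence: for $\xi$ in the common projected Aubry set, $h_{H,\eta}(\xi,\cdot)$ is indeed the maximal element of $\mathcal{S}^-_H(\eta)$ normalized to vanish at $\xi$ (and dually in the second slot), so Theorem 2 gives $h_{H_1,\eta}(\xi,q)=h_{H_2,\eta}(\xi,q)$ and $h_{H_1,\eta}(q,\xi)=h_{H_2,\eta}(q,\xi)$ for all $q$; the identity $h_{H,\eta}(q,q)=\min_{\xi\in A_{H,c}}\{h_{H,\eta}(q,\xi)+h_{H,\eta}(\xi,q)\}$ (factorization of the Peierls barrier through the Aubry set) then handles $B_{H,c}$, and $b_{H,c}$ is immediate because every term in its definition has an argument in the Aubry set. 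The paper instead represents both barriers as $\sup$, resp.\ $\inf$, of $u_--u_+$ over conjugate pairs (its Lemmas 4.2 and 4.3) and concludes from Proposition 4.2; your version trades those representation lemmas for the extremality of $h(\xi,\cdot)$ and the barrier factorization, which is a legitimate exchange, but it does not let you bypass the Mather-set/conjugacy analysis that both arguments ultimately need.
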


\begin{theo}
Let $H_1, H_2$ be two Tonelli Hamiltonians. If $\{H_1,H_2\}=0$, then
$\stackrel{\scriptscriptstyle
\ast}{A}_{H_1,c}=\stackrel{\scriptscriptstyle \ast}{A}_{H_2,c}$
and $\stackrel{\scriptscriptstyle
\ast}{N}_{H_1,c}=\stackrel{\scriptscriptstyle \ast}{N}_{H_2,c}$
for any cohomology class in $H^1(M, \mathbb{R})$.
\end{theo}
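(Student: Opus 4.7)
\emph{Proof proposal.} The strategy is to deduce Theorem 4 directly from Theorems 2 and 3 by invoking the standard weak KAM characterizations of the Aubry and Ma\~n\'e sets as graphs of differentials of common weak KAM (sub)solutions. The commuting hypothesis $\{H_1,H_2\}=0$ should not need to be revisited once those two preceding theorems are in hand.

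First I would handle the projected versions. By the definitions, $\rho_{H,c}(q,q)=2h_{H,\eta}(q,q)=2B_{H,c}(q)$, and the Peierls barrier is nonnegative (apply the subsolution inequality $u(q_2)-u(q_1)\le h^T_{H,\eta}(q_1,q_2)$ with $q_1=q_2$ to any critical subsolution $u$). Hence the projected Aubry set equals $\{q\in M : B_{H,c}(q)=0\}$, and Theorem 3 gives $A_{H_1,c}=A_{H_2,c}$ at once. For the projected Ma\~n\'e set I would use the Fathi-type characterization
\[
N_{H,c}=\bigcup_{(u^-,u^+)}\{q\in M : u^-(q)=u^+(q)\},
\]
with the union running over pairs $(u^-,u^+)\in\mathcal{S}^-_H(\eta)\times\mathcal{S}^+_H(\eta)$ that are conjugate, i.e.\ $u^-=u^+$ on $A_{H,c}$. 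Theorem 2 identifies $\mathcal{S}^\pm_{H_1}(\eta)$ with $\mathcal{S}^\pm_{H_2}(\eta)$, and the coincidence of projected Aubry sets just obtained makes the conjugacy condition intrinsic, so $N_{H_1,c}=N_{H_2,c}$.

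To lift to $T^*M$ I would invoke the further weak KAM fact that every critical subsolution $u$ is differentiable at each $q\in A_{H,c}$ with $d_qu$ independent of the choice of $u$, and that for a conjugate pair $(u^-,u^+)$ both functions are differentiable with equal derivatives at every point of the coincidence set $\{u^-=u^+\}$ (this last point uses that $u^-\le u^+$ and that the difference attains a local minimum there, together with viscosity sub/supersolution touching). These facts yield
\[
\stackrel{\scriptscriptstyle\ast}{A}_{H,c}=\{(q,\eta_q+d_qu):q\in A_{H,c}\},\qquad \stackrel{\scriptscriptstyle\ast}{N}_{H,c}=\bigcup_{(u^-,u^+)}\{(q,\eta_q+d_qu^-):u^-(q)=u^+(q)\},
\]
valid for any critical subsolution $u$ in the first formula. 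Choosing weak KAM solutions common to $H_1$ and $H_2$, supplied by Theorem 2, the two right-hand sides are manifestly the same for both Hamiltonians, and the conclusion follows.

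The main obstacle I anticipate is purely expository/bookkeeping: one must check that the graph characterizations above, which are standard in Fathi's weak KAM presentation, do agree with the $\alpha$/$\omega$-limit formulation of $\dot{A}_{H,c}$ and the $c$-minimizer formulation of $\dot{N}_{H,c}$ used in the present paper, so that the equalities of weak KAM data really translate into equalities of the dynamically defined sets. No further regularity of the common subsolution (such as the $C^{1,1}$ subsolution promised in the abstract) is needed here, since the differential $d_qu$ is used only at points where weak KAM theory already guarantees its existence.
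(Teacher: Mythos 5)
Your proposal is correct, and it reaches Theorem 4 by a genuinely different route in one key step. The paper, like you, rests everything on Fathi's characterizations of $\stackrel{\scriptscriptstyle\ast}{A}_{H,c}$ and $\stackrel{\scriptscriptstyle\ast}{N}_{H,c}$ as the intersection, respectively union, over conjugate pairs $(u_-,u_+)$ of the sets $\{(q,\eta_q+d_qu_-): u_-(q)=u_+(q)\}$, and then concludes from Theorem 2 together with Proposition 4.2 (that $u_-\sim_{H_1}u_+\iff u_-\sim_{H_2}u_+$); Proposition 4.2 is in turn deduced from Proposition 4.1 ($\stackrel{\scriptscriptstyle\ast}{M}_{H_1,c}\subseteq\stackrel{\scriptscriptstyle\ast}{A}_{H_2,c}$ and symmetrically), whose proof re-invokes $\{H_1,H_2\}=0$ through Sorrentino's invariance of the Mather set under $\phi^t_{H_2}$ and Mather's estimate $\rho_{H_1,c}(q_0,q_1)\le Cd(q_0,q_1)^2$. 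You instead make the conjugacy relation intrinsic by combining Theorem 3 with $A_{H,c}=\{q: B_{H,c}(q)=0\}$: equal barriers give $A_{H_1,c}=A_{H_2,c}$, and since being conjugate (equality on the projected Mather set, which is how the paper defines $\sim_H$) is equivalent to equality on the projected Aubry set --- forward because $A_{H,c}$ is the intersection of the coincidence sets of conjugate pairs, backward because $M_{H,c}\subseteq A_{H,c}$ --- the relations $\sim_{H_1}$ and $\sim_{H_2}$ coincide. What your route buys is that the commutation hypothesis is never used again beyond Theorems 2 and 3 (the extra dynamical input is hidden inside the paper's proof of Theorem 3), whereas the paper's version simply reuses the Proposition 4.2 it had already established in Section 4; the lift to $T^*M$ via a common backward weak KAM solution from Theorem 2, differentiability of critical subsolutions on the Aubry set with subsolution-independent differential, and differentiability of conjugate pairs at coincidence points is exactly the standard material the paper quotes. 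Two small points to fix when writing it up: for a conjugate pair the inequality is $u_+\le u_-$ (the second barrier is nonnegative), not $u_-\le u_+$, though your touching argument is unaffected; and you should state explicitly the equivalence between conjugacy-on-the-Mather-set and equality-on-the-Aubry-set, since the paper's $\sim_H$ is defined via $M_{H,[\eta]}$.
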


\begin{theo}(\textbf{Quasi-linearity of $\alpha$-function:})
Let $H_1, H_2$ be two Tonelli Hamiltonians. If $\{H_1,H_2\}=0$, then
$\alpha_{H_1+H_2}(c)=\alpha_{H_1}(c)+\alpha_{H_2}(c)$, for any
cohomology class $c \in H^1(M, \mathbb{R})$.
\end{theo}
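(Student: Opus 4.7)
The plan is to reduce the statement entirely to Theorem 4, combined with the classical weak KAM fact that the Aubry set is contained in the Hamiltonian level $\{H=\alpha_H(c)\}$. First I would note that $H_1+H_2$ is itself Tonelli, since fiberwise strict convexity and superlinearity are preserved under sums, so $\alpha_{H_1+H_2}(c)$ is well defined. By bilinearity of the Poisson bracket,
$$\{H_1,\,H_1+H_2\}=\{H_1,H_2\}=0,\qquad \{H_2,\,H_1+H_2\}=\{H_1,H_2\}=0,$$
so the three Tonelli Hamiltonians $H_1$, $H_2$, $H_1+H_2$ pairwise commute.

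Next I would apply Theorem 4 to the pairs $(H_1,H_1+H_2)$ and $(H_2,H_1+H_2)$, obtaining a single common Aubry set
$$\stackrel{\scriptscriptstyle\ast}{A}_c\;:=\;\stackrel{\scriptscriptstyle\ast}{A}_{H_1,c}\;=\;\stackrel{\scriptscriptstyle\ast}{A}_{H_2,c}\;=\;\stackrel{\scriptscriptstyle\ast}{A}_{H_1+H_2,c}\;\subset\; T^*M,$$
which is nonempty because it contains the (nonempty) Mather set.

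Then I would quote the standard consequence of Fathi's weak KAM theory: for any Tonelli Hamiltonian $H$ and any cohomology class $c$, the Aubry set lies in the energy level $H^{-1}(\alpha_H(c))$. Indeed, any backward weak KAM solution $u\in\mathcal{S}^-_H(\eta)$ with $[\eta]=c$ is differentiable at every point of the projected Aubry set $A_{H,c}$, the map $q\mapsto(q,\eta_q+d_qu)$ parametrizes $\stackrel{\scriptscriptstyle\ast}{A}_{H,c}$, and $u$ satisfies $H(q,\eta+du)=\alpha_H(c)$ pointwise on $A_{H,c}$. Fixing any single point $(q,p)\in \stackrel{\scriptscriptstyle\ast}{A}_c$ and evaluating each of the three Hamiltonians there yields
$$\alpha_{H_1+H_2}(c)=(H_1+H_2)(q,p)=H_1(q,p)+H_2(q,p)=\alpha_{H_1}(c)+\alpha_{H_2}(c),$$
which is the desired identity.

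The only delicate point is the level-set property of the Aubry set, but this is a classical fact that does not itself require commutativity; all the genuinely new input needed is already packaged into Theorem 4. Once Theorem 4 is in hand, quasi-linearity of $\alpha$ follows simply by reading off the values of three \emph{different} Tonelli Hamiltonians, each equal to its own $\alpha$-value, at one common point of the common Aubry set.
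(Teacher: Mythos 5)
Your proposal is correct and follows essentially the same route as the paper: commute $H_1+H_2$ with each $H_i$, invoke the equality of Aubry sets (Theorem 4; the paper's citation of ``Theorem 3'' there is evidently a slip), and evaluate all three Hamiltonians at a single point of the common Aubry set using the fact that the Aubry set lies in the level $\{H=\alpha_H(c)\}$. Your added remarks — that $H_1+H_2$ is again Tonelli and that the common Aubry set is nonempty — are worthwhile details the paper leaves implicit.
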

\begin{rem}
In the case that $M=\mathbb{T}^n$, the result in Theorem 5 has been
obtained by Viterbo  by his symplectic homogenization
theory \cite{Vi}. It should be mentioned that his result also covers
the case of non-Tonelli Hamiltonians, where $\alpha$ functions are
replaced by homogenizated Hamiltonians.
\end{rem}

 For any $c \in
H^1(M, \mathbb{R})$, let $$\Sigma_{H_1}(c)=\{(q,p):H_1(q,p) \leq
\alpha_{H_1}(c)\},$$ and
$$\Sigma_{H_2}(c)=\{(q,p):H_2(q,p) \leq
\alpha_{H_2}(c)\}.$$ Let $\Sigma(c)=\Sigma_{H_1}(c) \cap
\Sigma_{H_2}(c)$. Now we have
\begin{theo}
Given any smooth closed 1-form $\eta$ with $[\eta]=c$, there exists
a $C^{1,1}$ function $u$  such that $\eta+du \subset \Sigma(c)$.
\end{theo}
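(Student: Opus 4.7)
The plan is to start from a common backward weak KAM solution provided by Theorem 2 and upgrade it to a $C^{1,1}$ common subsolution using Bernard's $C^{1,1}$-regularization of critical subsolutions, relying on the commutation of the Lax--Oleinik semigroups of the two Hamiltonians to ensure the regularization preserves the $H_2$-subsolution property.

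First I would invoke Theorem 2 to fix $w\in \mathcal{S}^-_{H_1}({\eta})=\mathcal{S}^-_{H_2}({\eta})$, a simultaneous backward weak KAM solution for both Hamilton--Jacobi equations. Such a $w$ is Lipschitz and satisfies $H_i(q,\eta_q+d_qw)\le\alpha_{H_i}(c)$ almost everywhere for $i=1,2$, so $\eta+dw$ lies in $\Sigma(c)$ a.e. Next I would apply Bernard's $C^{1,1}$-regularization to $w$ with respect to $H_1$. That construction realizes the regularized function $u$ as a composition of one-sided $H_1$-Lax--Oleinik operators $T^+_{H_1,\eta,t}$ and $T^-_{H_1,\eta,s}$ applied to $w$ with suitably small time parameters, and produces a $C^{1,1}$ function that is still a critical subsolution of $H_1$, so $H_1(q,\eta_q+d_qu)\le\alpha_{H_1}(c)$ holds everywhere on $M$.

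The remaining task is to show $u$ is also a critical subsolution of $H_2$. I would use that $w$ is a fixed point of $T^-_{H_2,\eta,\sigma}$ for every $\sigma\ge 0$, and push $T^-_{H_2,\eta,\sigma}$ through every operator in the composition defining $u$. The same-type commutation $T^-_{H_2,\eta,\sigma}\circ T^-_{H_1,\eta,s}=T^-_{H_1,\eta,s}\circ T^-_{H_2,\eta,\sigma}$ is Theorem 1, while the mixed-type commutation
\[
T^-_{H_2,\eta,\sigma}\circ T^+_{H_1,\eta,t}=T^+_{H_1,\eta,t}\circ T^-_{H_2,\eta,\sigma}
\]
is the essential additional ingredient. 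Granting both, $T^-_{H_2,\eta,\sigma}u=u$ for every $\sigma\ge 0$; hence $u\in\mathcal{S}^-_{H_2}({\eta})$ and in particular $H_2(q,\eta_q+d_qu)\le\alpha_{H_2}(c)$. Combined with the preceding step, $u$ is the desired $C^{1,1}$ function with $\eta+du\subset\Sigma(c)$.

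The main obstacle is the mixed commutation displayed above, which extends Theorem 1 beyond same-type operators. Its proof would parallel the Stokes-type argument underlying Theorem 1: form the map $\Psi(s,\sigma)=\phi^s_{H_1}\phi^\sigma_{H_2}(x_0)$ from a parameter rectangle $[0,t]\times[0,\sigma_0]$ into $T^*M$; the bracket condition $\{H_1,H_2\}=0$ forces $\Psi^*\omega=0$, so $\Psi^*(p\,dq)$ is closed and $\int_{\partial R}\Psi^*(p\,dq)=0$. Unfolding the four boundary integrals as $L_{H_1}$- and $L_{H_2}$-actions of the projected edge curves, and using that the projected rectangle bounds a disk in $M$ to cancel the $\eta$-flux contributions, yields the desired identity of the two compositions on fixed points of $T^-_{H_2,\eta,\cdot}$. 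Carrying out this Stokes computation in detail is the technical heart of the proof; once in place, the three steps above assemble directly to the conclusion.
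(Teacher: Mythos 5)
Your strategy has a genuine gap at exactly the point you identify as the ``technical heart'': the mixed-type commutation $T^-_{H_2,\eta,\sigma}\circ T^+_{H_1,\eta,t}=T^+_{H_1,\eta,t}\circ T^-_{H_2,\eta,\sigma}$ is false in general, and no Stokes-type argument can rescue it, because it already fails when $H_1=H_2=H$ (a case where $\{H_1,H_2\}=0$ holds trivially). Indeed, for any Tonelli $H$ one has $T^+_{H,\eta,t}T^-_{H,\eta,t}u\le u\le T^-_{H,\eta,t}T^+_{H,\eta,t}u$ for all continuous $u$; if the two compositions agreed, every continuous $u$ would equal $T^-_{H,\eta,t}T^+_{H,\eta,t}u$, which is semi-concave, a contradiction. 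Restricting the identity to your seed $w\in\mathcal{S}^-_{H_1}(\eta)=\mathcal{S}^-_{H_2}(\eta)$ does not help either: with $H_1=H_2=H$ your chain would give $T^-_{H,\eta,\sigma}T^+_{H,\eta,t}w=T^+_{H,\eta,t}w$, i.e.\ $T^+_{H,\eta,t}w$ would be a semi-convex (hence, being also semi-concave as a backward weak KAM solution, $C^{1,1}$) viscosity solution of the critical equation; such solutions do not exist in general (e.g.\ $M=S^1$, $H=\tfrac12 p^2+\cos(2\pi q)-1$, where every critical viscosity solution has a corner). The same objection shows your final conclusion proves too much: you end with $u\in\mathcal{S}^-_{H_2}(\eta)$ with $u$ of class $C^{1,1}$, i.e.\ a $C^{1,1}$ viscosity \emph{solution}, whereas the theorem only asks for (and only can deliver) a $C^{1,1}$ \emph{subsolution}. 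The reason the Stokes heuristic breaks down for mixed operators is that the relevant extremals for $T^+$ and $T^-$ are maximizers and minimizers run in opposite time directions, so the variational inequality chain used in Theorem 1 cannot be closed.

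The paper avoids this entirely by choosing the seed and the operators more carefully: it takes a common \emph{forward} weak KAM solution $u\in\mathcal{S}^+_{H_1}(\eta)=\mathcal{S}^+_{H_2}(\eta)$ (Theorem 2), which is semi-convex and is a critical subsolution for both Hamiltonians, and then applies only backward operators, forming $T^-_{H_1,\eta,s}T^-_{H_2,\eta,r}u$ with $s,r$ small. Smallness of $s,r$ preserves semi-convexity while the backward operators give semi-concavity, so the result is $C^{1,1}$ (Bernard); and the subsolution property for \emph{both} equations follows from the same-type commutation of Theorem 1 combined with Fathi's criterion (a Lipschitz $v$ is a subsolution of $H_i$ iff $t\mapsto T^-_{H_i,\eta,t}v$ is non-decreasing) and the order-preservation of the Lax--Oleinik operators. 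If you replace your regularization-plus-fixed-point argument by this scheme, no mixed commutation is needed and the conclusion is exactly the subsolution statement required.
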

\begin{rem}
This theorem implies that both Hamilton-Jacobi equations
$$H_1(q, \eta+d_qu)=\alpha_{H_1}(c)$$ and $$H_2(q,
\eta+d_qu)=\alpha_{H_2}(c)$$ have a common $C^{1,1}$ subsolution,
in the case that $\{H_1,H_2\}=0$.
\end{rem}

\begin{rem}
In a preprint \cite{Cui}, the first author has extended some results of this article to the time-periodic case.
\end{rem}

\begin{note}
The first version of this paper appeared in July, 2009 and we submitted it to a journal on  August 21, 2009. As Zavidovique pointed out, which version contained a big gap, although it is very easy to fix. In November, the corrected version appeared and on November 18, 2009,  we put it on arXiv (arXiv:0911.3471). Shortly after, Zavidovique also put on (on November 19, 2009) a reprint \cite{Za}(arXiv:0911.3739), which contains similar results.

The results of this paper were also posted by the first author at a network meeting of Humboldt Foundation (November 24-26, 2009, Heidelberg), and the announcement of results was submitted to Humboldt Foundation by the first author on September 04, 2009.
\end{note}

\section{Proof of Theorem 1}

We prove the first equality, and the second equality in the theorem can be proved
similarly.

For any point $q_0 \in M$, we will prove that
$$T^-_{H_1,\eta,s}T^-_{H_2,\eta, r}u(q_0)=T^-_{H_2,\eta, r}T^-_{H_1,\eta, s}u(q_0).$$
By the definition,
\begin{eqnarray*}T^-_{H_1, \eta, s}T^-_{H_2, \eta,r}u(q_0)&=&\min_{x
\in M}(T^-_{H_2,\eta,r}u(x)+h^s_{H_1,\eta}(x,q_0))\\
&=& \min_{x, y \in
M}(u(y)+h^r_{H_2,\eta}(y,x)+h^s_{H_1,\eta}(x,q_0)).
\end{eqnarray*}
Clearly, there exist two points $x_0,y_0$ such that
$$T^-_{H_1,\eta, s}T^-_{H_2,\eta, r}u(q   _0)=u(y_0)+h^r_{H_2,\eta}(y_0,x_0)+h^s_{H_1,\eta}(x_0,q_0)).$$
We assume that $$\gamma_1:[0,r] \rightarrow M \text{ and }
\gamma_2:[r,r+s] \rightarrow M$$ are two minimizers that reach
$h^r_{H_2,\eta}(y_0,x_0),h^s_{H_1,\eta}(x_0,q_0)$ respectively.

Now we have
\begin{lem}
$\mathcal{L}_{H_2}(\dot{\gamma}_1(r))=\mathcal{L}_{H_1}(\dot{\gamma}_2(r))$.
\end{lem}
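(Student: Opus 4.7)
The plan is to view $x_0$ as a free interior midpoint in a variational problem with two \emph{different} Lagrangians, $L_{H_2}$ on $[0,r]$ and $L_{H_1}$ on $[r,r+s]$, and exploit the resulting first-order necessary condition. This is precisely the Weierstrass--Erdmann corner condition, which in Hamiltonian form says that the momenta conjugate to each Lagrangian must coincide at the junction time.

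First I would fix an arbitrary tangent vector $w\in T_{x_0}M$ and deform $x_0$ to $x_\epsilon=\exp_{x_0}(\epsilon w)$, constructing a one-parameter family of competitor pairs $(\tilde\gamma_1^\epsilon,\tilde\gamma_2^\epsilon)$ with $\tilde\gamma_1^\epsilon:[0,r]\to M$ joining $y_0$ to $x_\epsilon$, $\tilde\gamma_2^\epsilon:[r,r+s]\to M$ joining $x_\epsilon$ to $q_0$, and $(\tilde\gamma_1^0,\tilde\gamma_2^0)=(\gamma_1,\gamma_2)$. A concrete choice is to leave $\gamma_1,\gamma_2$ unchanged outside a small window $[r-\delta,r+\delta]$ and interpolate smoothly inside it (in a chart around $x_0$, using a cutoff function times $\epsilon w$).

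By the minimizing property of $x_0$ (recall that $x_0,y_0$ minimize $u(y)+h^r_{H_2,\eta}(y,x)+h^s_{H_1,\eta}(x,q_0)$), the combined action
\begin{equation*}
F(\epsilon)=\int_0^r (L_{H_2}-\eta)(\tilde\gamma_1^\epsilon,\dot{\tilde\gamma}_1^\epsilon)\,dt+\int_r^{r+s}(L_{H_1}-\eta)(\tilde\gamma_2^\epsilon,\dot{\tilde\gamma}_2^\epsilon)\,dt
\end{equation*}
satisfies $F(\epsilon)\ge F(0)$, hence $F'(0)=0$. Now I would apply the standard first-variation formula: since $\gamma_1,\gamma_2$ solve the Euler--Lagrange equations for $L_{H_2}$ and $L_{H_1}$ respectively, the interior integrals vanish, and since $\eta$ is closed its contribution at the free endpoint collapses to $\eta_{x_0}(w)$ (this is the only place where closedness of $\eta$ enters). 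The two boundary terms are $(\mathcal{L}_{H_2}(\dot\gamma_1(r))-\eta_{x_0})(w)$ from the first integral and $-(\mathcal{L}_{H_1}(\dot\gamma_2(r))-\eta_{x_0})(w)$ from the second; summing and letting $w$ range over $T_{x_0}M$ yields the claimed equality.

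The only technical obstacle is to make sure the competitor family is $C^1$ in $\epsilon$ near $0$ so that $F'(0)$ exists and matches the formal first variation; this is routine since $\gamma_1,\gamma_2$ are smooth extremals and the endpoint perturbation is smooth. I note that the commutativity hypothesis $\{H_1,H_2\}=0$ is not actually used for this lemma; it will be invoked in later lemmas to upgrade the momentum-matching identity into a statement about the two Hamiltonian flows coinciding along the constructed trajectories.
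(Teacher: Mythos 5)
Your proof is correct and follows essentially the same route as the paper: a first-variation (Weierstrass--Erdmann corner) argument at the junction point, using that $\gamma_1,\gamma_2$ are Euler--Lagrange extremals for $L_{H_2}$ and $L_{H_1}$, that $\eta$ is closed so its contribution cancels, and that the minimality of $x_0$ forces the boundary terms at $t=r$ to balance, giving $\mathcal{L}_{H_2}(\dot\gamma_1(r))=\mathcal{L}_{H_1}(\dot\gamma_2(r))$. Your observation that $\{H_1,H_2\}=0$ is not needed here also matches the paper, whose proof of this lemma likewise does not invoke commutativity.
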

The proof of this lemma is just a standard variational discussion. Throughout this paper, we use $*$ to denote the conjunction of curves or trajectories.
\begin{proof}
Let $\Gamma(v,t)$ be an arbitrary variation of $\gamma_1 *
\gamma_2$, here $v \in (-\epsilon, \epsilon) (0 < \epsilon \in
\mathbb{R}), t \in [0,r+s]$, $\Gamma(v,0)=\gamma_1(0),
\Gamma(v,r+s)=\gamma_2(r+s)$, and
\[
\Gamma(0,t)=\left\{
\begin{array}{cc}
\gamma_1(t) & \text{ when } 0 \leq t \leq r, \\
\gamma_2(t) & \text{ when } r \leq t \leq r+s.
\end{array}
\right.
\]
Then, for any fixed $v$, we have
\begin{eqnarray*}
&&\int^r_0(L_{H_2}-\eta+\alpha_{H_2}([\eta])) \Big{(}\Gamma(v,t),
\frac{\partial \Gamma(v,t)}{\partial t}\Big{)}dt
\\
&+&\int^{r+s}_r(L_{H_1}-\eta+\alpha_{H_1}([\eta]))
\Big{(}\Gamma(v,t), \frac{\partial \Gamma(v,t)}{\partial
t}\Big{)}dt\\
& \geq & \int^r_0(L_{H_2}-\eta+\alpha_{H_2}([\eta]))(\gamma_1(t),
\dot{\gamma}_1(t))dt\\
&+&\int^{r+s}_r(L_{H_1}-\eta+\alpha_{H_1}([\eta]))(\gamma_2(t),
\dot{\gamma}_2(t))dt.
\end{eqnarray*}
Then, we have
\begin{eqnarray*}
&&\frac{d}{dv}
\mid_{v=0}\Big{(}\int^r_0(L_{H_2}-\eta+\alpha_{H_2}([\eta])+
\int^{r+s}_r(L_{H_1}-\eta+\alpha_{H_1}([\eta])\Big{)}\Big{(}\Gamma(v,t),
\frac{\partial \Gamma(v,t)}{\partial t}\Big{)}dt\\
&=&0.
\end{eqnarray*}
Thus,
\begin{eqnarray*}
&&0\\
 &=&\frac{d}{dv}
\mid_{v=0}\Big{(}\int^r_0(L_{H_2}-\eta+\alpha_{H_2}([\eta]))+
\int^{r+s}_r(L_{H_1}-\eta+\alpha_{H_1}([\eta]))\Big{)}\Big{(}\Gamma(v,t),
\frac{\partial \Gamma(v,t)}{\partial t}\Big{)}dt\\
&=&\int^r_0\Big{(}\frac{\partial
L_{H_2}(\Gamma(0,t),\frac{d}{dt}\Gamma(0,t))}{\partial
q}\frac{\partial \Gamma (v,t)} {\partial v}\mid_{v=0}+\frac{\partial
L_{H_2}(\Gamma(0,t),\frac{d}{dt}\Gamma(0,t))}{\partial \dot
q}\frac{\partial^2 \Gamma (v,t)}{\partial v \partial t}\mid_{v=0}\Big{)}dt\\
&+&\int^{r+s}_r\Big{(}\frac{\partial
L_{H_1}(\Gamma(0,t),\frac{d}{dt}\Gamma(0,t))}{\partial
q}\frac{\partial \Gamma (v,t)}{\partial v}\mid_{v=0}+\frac{\partial
L_{H_1}(\Gamma(0,t),\frac{d}{dt}\Gamma(0,t))}{\partial \dot
q}\frac{\partial^2 \Gamma (v,t)}{\partial v \partial
t}\mid_{v=0}\Big{)}dt\\
&=&\int^r_0 \frac{\partial L_{H_2}(\gamma_1(t),
\dot{\gamma}_1(t))}{\partial q}\frac{\partial \Gamma (v,t)}{\partial
v}\mid_{v=0}dt +\int^r_0 \frac{\partial L_{H_2}(\gamma_1(t),
\dot{\gamma}_1(t))}{\partial \dot{q}}d(\frac{\partial \Gamma
(v,t)}{\partial v}\mid_{v=0})\\
&+&\int^{r+s}_s \frac{\partial L_{H_1}(\gamma_2(t),
\dot{\gamma}_2(t))}{\partial q}\frac{\partial \Gamma (v,t)}{\partial
v}\mid_{v=0}dt +\int^{r+s}_r \frac{\partial L_{H_1}(\gamma_2(t),
\dot{\gamma}_2(t))}{\partial \dot{q}}d(\frac{\partial \Gamma
(v,t)}{\partial v}\mid_{v=0})\\
&=&\Big{(}\frac{\partial L_{H_2}(\gamma_1(t),
\dot{\gamma}_1(t))}{\partial \dot{q}}\frac{\partial \Gamma
(v,t)}{\partial v}\mid_{v=0}\Big{)}|^r_0 \\
&+&\int^r_0 \Big{(}\frac{\partial L_{H_2}(\gamma_1(t),
\dot{\gamma}_1(t))}{\partial q}-\frac{d}{dt} \frac{\partial
L_{H_2}(\gamma_1(t), \dot{\gamma}_1(t))}{\partial
\dot{q}}\Big{)}(\frac{\partial \Gamma (v,t)}{\partial
v}\mid_{v=0})dt\\
&+&\Big{(}\frac{\partial L_{H_1}(\gamma_2(t),
\dot{\gamma}_2(t))}{\partial \dot{q}}\frac{\partial \Gamma
(v,t)}{\partial v}\mid_{v=0}\Big{)}|^{r+s}_{r} \\
&+&\int^{r+s}_r \Big{(}\frac{\partial L_{H_1}(\gamma_2(t),
\dot{\gamma}_2(t))}{\partial q}-\frac{d}{dt} \frac{\partial
L_{H_1}(\gamma_2(t), \dot{\gamma}_2(t))}{\partial
\dot{q}}\Big{)}(\frac{\partial \Gamma (v,t)}{\partial
v}\mid_{v=0})dt\\
&=&\frac{\partial L_{H_2}(\gamma_1(r), \dot{\gamma}_1(r))}{\partial
\dot{q}}\frac{\partial \Gamma (v,r)}{\partial
v}\mid_{v=0}-\frac{\partial L_{H_1}(\gamma_2(r),
\dot{\gamma}_2(r))}{\partial \dot{q}}\frac{\partial \Gamma
(v,r)}{\partial v}\mid_{v=0},
\end{eqnarray*}
where the second equality follows from direct calculation, together
with the facts that $\eta$ is closed, and the variation is taken to
be fixed endpoints; the last equality follows from that $\gamma_1$
is a solution of Euler-Lagrange equation associated to $L_{H_2}$,
and $\gamma_2$ is a solution of Euler-Lagrange equation associated
to $L_{H_1}$.
 So, we have
 $$\frac{\partial L_{H_2}(\gamma_1(r), \dot{\gamma}_1(r))}{\partial
\dot{q}}=\frac{\partial L_{H_1}(\gamma_2(r),
\dot{\gamma}_2(r))}{\partial \dot{q}},$$ since the above formula
holds for any variation $\Gamma$. In other words,
$\mathcal{L}_{H_2}(\dot{\gamma}_1(r))=\mathcal{L}_{H_1}(\dot{\gamma}_2(r))$,
and Lemma 2.1 follows.
\end{proof}

Let
$$\mathcal{L}_{H_2}(\dot{\gamma}_1(r))=\mathcal{L}_{H_1}(\dot{\gamma}_2(r)):=p^{\diamond}.$$
Hence, if we assume that
$\mathcal{L}_{H_1}(\dot{\gamma}_2(r+s))=p_0$, then $$(y_0,
\mathcal{L}_{H_2}(\dot{\gamma}_1(0)))= \phi^{-r}_{H_2}
\phi^{-s}_{H_1}(q_0,p_0).$$ In this case,
\begin{eqnarray*}
&&T^-_{H_1,\eta, s}T^-_{H_2,\eta, r}u(q_0)\\
&=&u(y_0)+((p-\eta)dq)\big{(}\phi^t_{H_2}(x_0,p^{\diamond})|_{[-r,0]}
*\phi^t_{H_1}(x_0,p^{\diamond})|_{[0,s]}\big{)}\\
&-&sH_1(q_0,p_0)-rH_2(x_0,p^{\diamond})
+s\alpha_{H_1}([\eta])+r\alpha_{H_2}([\eta])\\
&=&u(y_0)+((p-\eta)dq)\big{(}\phi^t_{H_2}(x_0,p^{\diamond})|_{[-r,0]}
*\phi^t_{H_1}(x_0,p^{\diamond})|_{[0,s]}\big{)}\\
&-&sH_1(q_0,p_0)-rH_2(q_0,p_0)
+s\alpha_{H_1}([\eta])+r\alpha_{H_2}([\eta]),
\end{eqnarray*}
here, and in the following, $\eta$ is regarded as a smooth section
of $T^*M$ and $(p-\eta)dq$ is regarded as a smooth 1-form on $T^*M$. Note that
the first equality follows from direct calculation; the second
equality follows from that
$\phi^{-s}_{H_1}(q_0,p_0)=(x_0,p^{\diamond})$ and the fact that
$H_2$ is constant on the trajectory of $\phi^t_{H_1}$.

 Let
$\gamma_4:[s,s+r] \rightarrow M$ be the curve such that $\pi:
\phi^t_{H_2}|_{[-r,0]}(q_0,p_0)=\gamma_4$, up to a time translation.
Similarly, let $\gamma_3:[0,s] \rightarrow M$ be the curve such that
$\pi:
\phi^t_{H_1}|_{[-s,0]}(\gamma_4(s),\mathcal{L}_{H_2}(\dot{\gamma}_4(s)))=\gamma_3$,
up to a time translation. Since $\{H_1,H_2\}=0$, we have $$\pi \circ
\phi^{-s}_{H_1}\phi^{-r}_{H_2} (q_0,p_0)=y_0.$$
Hence,
\begin{eqnarray*}T^-_{H_2,\eta, r}T^-_{H_1,\eta, s}u(q_0) &\leq& u(\pi
\circ  \phi^{-s}_{H_1}\phi^{-r}_{H_2} (q_0,p_0))+ \int^s_0
(L_{H_1}-\eta+\alpha_{H_1}(c))(\gamma_3(t),\dot{\gamma}_3(t))dt
\\&+& \int^{s+r}_s
(L_{H_2}-\eta+\alpha_{H_2}(c))(\gamma_4(t),\dot{\gamma}_4(t))dt\\&=&
u(y_0)+((p-\eta)dq)\big{(}\phi^t_{H_1}(y_0,\mathcal{L}_{H_2}(\dot{\gamma}_1(0)))|_{[0,s]} *\phi^t_{H_2}(q_0,p_0)|_{[-r,0]}\big{)}\\
&-&rH_2(q_0,p_0)-sH_1(\gamma_4(s),\mathcal{L}_{H_2}(\dot{\gamma}_4(s)))+
r\alpha_{H_2}([\eta])+s\alpha_{H_1}([\eta])\\&=&
u(y_0)+((p-\eta)dq)\big{(}\phi^t_{H_2}(x_0,p^{\diamond})|_{[-r,0]}
*\phi^t_{H_1}(x_0,p^{\diamond})|_{[0,s]}\big{)}\\
&-&rH_2(q_0,p_0)-sH_1(q_0,p_0)+
r\alpha_{H_2}([\eta])+s\alpha_{H_1}([\eta])\\&=& T^-_{H_1, \eta,
s}T^-_{H_2, \eta, r}u(q_0),
\end{eqnarray*}
where the first inequality follows from the definition of
$T^-_{H,\eta,t}$,the first equality follows from the direct
calculation. For the second equality, we should say some more words:
 \begin{eqnarray*}
 &&((p-\eta)dq)\big{(}\phi^t_{H_1}(y_0,\mathcal{L}_{H_2}(\dot{\gamma}_1(0)))|_{[0,s]} *\phi^t_{H_2}(q_0,p_0)|_{[-r,0]}\big{)}\\
 &-&((p-\eta)dq)\big{(}\phi^t_{H_2}(x_0,p^{\diamond})|_{[-r,0]}
*\phi^t_{H_1}(x_0,p^{\diamond})|_{[0,s]}\big{)}\\
&=& \langle dp \wedge dq, \phi^t_{H_1}|_{[0,s]}\big{(}\phi^t_{H_2}(x_0,p^{\diamond})|_{[-r,0]}\big{)} \rangle\\
&=&0,
\end{eqnarray*}
here, the first equality follows from Stokes' formula (recall that $\{H_1,H_2\}=0$), the last equality follows from $$\phi^t_{H_1}|_{[0,s]}\big{(}\phi^t_{H_2}(x_0,p^{\diamond})|_{[-r,0]}\big{)}$$
is isotropic.

Similarly, we can prove the opposite inequality, and hence
Theorem 1 is proved.

Based on Theorem 1, we have the following propositions, which are crucial in the proof of Theorem 2.
\begin{prop}
$\mathcal{S}^{-}_{H_1,\eta} \cap \mathcal{S}^{-}_{H_2,\eta} \neq \emptyset, \mathcal{S}^{+}_{H_1,\eta} \cap \mathcal{S}^{+}_{H_2,\eta} \neq \emptyset.$
\end{prop}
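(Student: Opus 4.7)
The plan is to exploit the commutativity established in Theorem 1 to promote a weak KAM solution of one Hamiltonian into a common fixed point of both semigroups. Concretely, to produce an element of $\mathcal{S}^{-}_{H_1,\eta}\cap\mathcal{S}^{-}_{H_2,\eta}$, I start from any backward weak KAM solution $v\in\mathcal{S}^{-}_{H_1,\eta}$ (which exists by standard weak KAM theory, see \cite{Fa3}) and define
$$w=\lim_{t\to+\infty}T^-_{H_2,\eta,t}v.$$
The existence of this uniform limit, and the fact that it lies in $\mathcal{S}^{-}_{H_2,\eta}$, are given to us for free by the convergence of the Lax-Oleinik semigroup in the autonomous case, recalled in the introduction.

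The remaining point is to verify that $w$ is still a fixed point of $T^-_{H_1,\eta,s}$ for every $s\geq 0$. For this I would use two ingredients: continuity (in fact non-expansiveness in the sup-norm) of $T^-_{H_1,\eta,s}$, which is standard and lets me exchange the operator with the limit in $t$, and the commutation relation of Theorem 1. Chaining them gives
$$T^-_{H_1,\eta,s}w=\lim_{t\to+\infty}T^-_{H_1,\eta,s}T^-_{H_2,\eta,t}v=\lim_{t\to+\infty}T^-_{H_2,\eta,t}T^-_{H_1,\eta,s}v=\lim_{t\to+\infty}T^-_{H_2,\eta,t}v=w,$$
using $T^-_{H_1,\eta,s}v=v$ at the third equality. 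Hence $w\in\mathcal{S}^{-}_{H_1,\eta}\cap\mathcal{S}^{-}_{H_2,\eta}$, which is the first assertion.

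The forward statement is entirely parallel: pick $v\in\mathcal{S}^{+}_{H_1,\eta}$, set $w=\lim_{t\to+\infty}T^+_{H_2,\eta,t}v\in\mathcal{S}^{+}_{H_2,\eta}$, and use the second commutation identity in Theorem 1 together with sup-norm continuity of $T^+_{H_1,\eta,s}$ to conclude $T^+_{H_1,\eta,s}w=w$. There is no genuine obstacle here once Theorem 1 is in hand; the only thing to be mildly careful about is to justify pulling $T^{\pm}_{H_1,\eta,s}$ through the limit, which follows from the classical 1-Lipschitz property of the Lax-Oleinik semigroups with respect to the uniform norm on $C^0(M,\mathbb{R})$.
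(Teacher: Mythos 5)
Your proposal is correct and is essentially the paper's own argument: start from a backward weak KAM solution of $H_1$, apply the $H_2$ Lax--Oleinik semigroup, let $t\to+\infty$ using Fathi's convergence theorem, and use the commutation of Theorem 1 to see the limit is still fixed by $T^-_{H_1,\eta,s}$ (and symmetrically for the forward case). The only cosmetic difference is that you justify the last step by the $1$-Lipschitz property of $T^-_{H_1,\eta,s}$ to pass it through the uniform limit, whereas the paper invokes the stability (closedness under uniform convergence) of the set of backward weak KAM solutions, after noting that each $T^-_{H_2,\eta,r}u$ is itself already a fixed point of the $H_1$ semigroup; these are interchangeable pieces of standard machinery.
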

\begin{proof}
We only prove the the first relation, and the second one can be proved similarly.

For any $u \in \mathcal{S}^{-}_{H_1,\eta}$ and for any $s, r \in [0,\infty)$, we have ,
$$T^-_{H_1,\eta,s}T^-_{H_2,\eta,r}u=T^-_{H_2,\eta,
r}T^-_{H_1,\eta,s}u=T^-_{H_2,\eta,
r}u.$$
Now let $s \rightarrow \infty$, we have $$\lim_{s \rightarrow \infty}T^-_{H_1,\eta,s}T^-_{H_2,\eta,r}u=\lim_{s \rightarrow \infty}T^-_{H_2,\eta,
r}u=T^-_{H_2,\eta,
r}u \in \mathcal{S}^-_{H_1, \eta}$$ for any $r \in [0,\infty)$, by weak KAM theory \cite{Fa3}.
Now let $r \rightarrow \infty$, we have $T^-_{H_2,\eta,
r}u$ converges uniformly  to a function $ u^* \in \mathcal{S}^{-}_{H_2,\eta}$ \cite{Fa3}. In fact, we also have
$u^* \in \mathcal{S}^{-}_{H_1,\eta}$. This  follows from the stability of backward weak KAM solutions \cite{Fa3}, since $T^-_{H_2,\eta, r}u \in \mathcal{S}^{-}_{H_1,\eta}$ for each $r$. Hence, we have  $u^* \in \mathcal{S}^{-}_{H_1,\eta} \cap \mathcal{S}^{-}_{H_2,\eta}$ and the first relation is proved.
\end{proof}
\begin{prop}
$H_2|_{\stackrel{\scriptscriptstyle
\ast}{A}_{H_1,[\eta]}}=\alpha_{H_2}([\eta]); H_1|_{\stackrel{\scriptscriptstyle
\ast}{A}_{H_2,[\eta]}}=\alpha_{H_1}([\eta])$.
\end{prop}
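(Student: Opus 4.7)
The plan is to use the common weak KAM solution produced in Proposition 2.1 together with the standard description of the Aubry set in terms of backward weak KAM solutions. By symmetry it suffices to establish the first identity.

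By Proposition 2.1, pick a function $u^{*} \in \mathcal{S}^{-}_{H_{1},\eta}\cap\mathcal{S}^{-}_{H_{2},\eta}$. Recall from weak KAM theory that a backward weak KAM solution of $H(q,\eta+d_{q}u)=\alpha_{H}([\eta])$ is a viscosity solution, hence in particular differentiable on the projected Aubry set $A_{H,[\eta]}$, and the graph map $q\mapsto(q,\eta(q)+du(q))$ parametrizes the Aubry set $\stackrel{\scriptscriptstyle\ast}{A}_{H,[\eta]}$ over $A_{H,[\eta]}$. Apply this to $H=H_{1}$ and $u=u^{*}$: for every $(q,p)\in\stackrel{\scriptscriptstyle\ast}{A}_{H_{1},[\eta]}$ the point $q$ lies in $A_{H_{1},[\eta]}$, $u^{*}$ is differentiable at $q$, and
\[
p=\eta(q)+du^{*}(q).
\]

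Now use that the same $u^{*}$ is a backward weak KAM solution for $H_{2}$. By the viscosity equation for $H_{2}$ at a point of differentiability of $u^{*}$,
\[
H_{2}\bigl(q,\eta(q)+du^{*}(q)\bigr)=\alpha_{H_{2}}([\eta]).
\]
Combining the two displayed identities gives $H_{2}(q,p)=\alpha_{H_{2}}([\eta])$ for every $(q,p)\in\stackrel{\scriptscriptstyle\ast}{A}_{H_{1},[\eta]}$, which is the first claim. Swapping the roles of $H_{1}$ and $H_{2}$ (using the other clause of Proposition 2.1) yields the second claim.

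The only non-routine point is the simultaneous use of the graph property of the Aubry set and of the viscosity equation at the same argument $(q,\eta(q)+du^{*}(q))$; this is legitimate because $u^{*}$ is a single function serving as a weak KAM solution for both Hamiltonians, so its common points of differentiability on $A_{H_{1},[\eta]}$ produce momenta that simultaneously solve the two Hamilton--Jacobi equations. All of the above ingredients are standard facts from Fathi's weak KAM theory, so no further technical obstacle arises.
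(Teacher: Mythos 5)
Your proposal is correct and follows essentially the same route as the paper: take the common backward weak KAM solution $u^{*}$ from Proposition 2.1, use that $\stackrel{\scriptscriptstyle\ast}{A}_{H_{1},[\eta]}$ lies in the graph of $\eta+du^{*}$, and evaluate the Hamilton--Jacobi equation for $H_{2}$ there. The only cosmetic difference is that the paper phrases the graph inclusion via the closure $\overline{\eta+du^{*}}$ while you invoke differentiability of $u^{*}$ on the projected Aubry set; both are the same standard fact from Fathi's theory.
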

\begin{proof}
Throughout this paper, $\overline{\eta+du}$ denotes the closure of the set of $$\{(q,\eta_q+d_qu)|u
\text{ is differentiable at } q\}.$$

Choose $u^* \in \mathcal{S}^{-}_{H_1,\eta} \cap \mathcal{S}^{-}_{H_2,\eta}$, we have
$\alpha_{H_2}([\eta])=H_2|_{\overline{\eta+du^*}}$. Since $\stackrel{\scriptscriptstyle
\ast}{A}_{H_1,[\eta]} \subset \overline{\eta+du^*}$, the first equality holds.
The second equality follows similarly.
\end{proof}

\section{Proof of Theorem 2}
Let $\eta$ be any smooth closed 1-form on $M$. Now, we will prove
that if $u \in \mathcal{S}^{-}_{H_1}(\eta)$, then $u \in
\mathcal{S}^{-}_{H_2}(\eta)$.

Firstly, we will show that if $u \in \mathcal{S}^{-}_{H_1}(\eta)$,
then $H_2|_{\overline{\eta+du}}=\text{constant}$. It follows from $u \in
\mathcal{S}^{-}_{H_1}(\eta)$ that $u$ is Lipschitz, hence $u$ is
differentiable almost everywhere (with respect to Lebesgue measure).
Since $\{H_1,H_2\}=0$, $H_2$ is constant along this trajectory
of $\phi^t_{H_1}$. Let $q$ be a differentiable point of $u$, then
there exists an unique trajectory $(q(t),p(t): t \in (-\infty,0]$ of
the Hamilton flow $\phi^t_{H_1}$ such that $q(0)=q,p(0)=\eta|_q +
d_qu$ and the limit set of $\big{(}q(t),p(t): t \in
(-\infty,0]\big{)}$ lies in $\stackrel{\scriptscriptstyle
\ast}{A}_{H_1,[\eta]}$. Hence, $H_2$ is constant on the closure of
this trajectory, and  so, $H_2$ is constant on some compact subset of $\stackrel{\scriptscriptstyle
\ast}{A}_{H_1,[\eta]}$. By Proposition 2.2, we have $H_2|_{\eta+du}$, hence $H_2|_{\overline{\eta+du}}$ is constant and the constant is $\alpha_{H_2}([\eta])$.

Next, we will show that $u \in \mathcal{S}^{-}_{H_2}(\eta)$, by
showing that $u$ is a viscosity solution of
$$H_2(q,\eta+d_qu)=\alpha_{H_2}([\eta]).$$

Let us  recall  the definition of viscosity solution of
Hamiltonian-Jacobi equation. Firstly, let us fix  arbitrarily a
smooth closed 1-form $\eta$ on $M$.  A function $u:M \rightarrow
\mathbb{R}$ is a viscosity subsolution of Hamilton-Jacobi equation
$$H(q,\eta+d_qu)=d$$ if for every $C^1$ function $\phi:M\rightarrow
\mathbb{R}$ and every point $q_0 \in M$ such that $u-\phi$ has a
maximum at $q_0$, we have $H(q_0, \eta|_{q_0}+d_{q_0}\phi) \leq d$.
A function $u:M \rightarrow \mathbb{R}$ is a viscosity supersolution
of Hamilton-Jacobi equation
$$H(q,\eta +d_qu)=d$$ if for every $C^1$ function $\phi:M\rightarrow
\mathbb{R}$ and every point $q_0 \in M$ such that $u-\phi$ has a
minimum at $q_0$, we have $H(q_0,\eta|_{q_0}+d_{q_0}\phi) \geq d$.

A function $u:M \rightarrow \mathbb{R}$ is a viscosity solution of
Hamilton-Jacobi equation $$H(q,\eta+d_qu)=d$$ if it is both a
subsolution and a supersolution. The set of viscosity solutions of
$$H(q,\eta+d_qu)=d$$ is denoted by $\mathcal{S}^v_H(\eta)$.

For a Tonelli Hamilton $H$, the Hamilton-Jacobi equation
$$H(q,\eta+d_qu)=d$$
has a viscosity solution if and only if $d=\alpha_{H}([\eta])$
\cite{Fa3}. Moreover, a function $u$ is a viscosity solution of
$$H(q,\eta+d_qu)=\alpha_{H}([\eta])$$ if and only if $u \in
\mathcal{S}^-_{H}(\eta)$ \cite{Fa3}.

Now we continue the proof. Since $u$ is Lipschitz and
$H_2(q,\eta+d_qu)=\alpha_{H_2}([\eta])$ on each differentiable
point of $u$, we have $u$ is a subsolution of
$$H_2(q,\eta+d_qu)=\alpha_{H_2}([\eta])$$
\cite{Fa3}. Now we will prove that $ u$ is also a viscosity
supersolution of
$$H_2(q,\eta+d_qu)=\alpha_{H_2}([\eta]).$$

Now we need
\begin{defi}
If $u:M \rightarrow \mathbb{R}$ a function, we say that the linear
form $p \in T^*_{x_0}M$ is a lower differential of u at $x_0$ , if
we can find a neighborhood $V$ of $x_0$ and a function $\phi:V
\rightarrow \mathbb{R}$, differentiable at $x_0$, with
$\phi(x_0)=u(x_0)$ and $d_{x_0} \phi=p$, and such that $\phi(x) \leq
u(x)$ for every $x \in V$. We denote by $D_-u(x_0)$ the set of lower
differential of $u$ at $x_0$.
\end{defi}
Now we only need to show that for each $q \in M$ and $p \in
D_{-}u(q)$, we have
$$H_2(q,\eta|_q+p) \geq \alpha_{H_2}([\eta]).$$

Recall the definition of semi-concave function (with linear
modulus) \cite{CS}, \cite{Fa3}. Let us fix once and for all a finite
atlas $\Phi$ of $M$ composed of charts $\phi:B_3 \rightarrow M$,
where $B_r$ is the open ball of radius $r$ centered at zero in
$\mathbb{R}^d$. We assume that the sets $\phi(B_1) (\phi \in \Phi)$
cover $M$. A family $\mathcal{F}$ of $C^2$ functions is said
$K$-bounded if
$$|d^2(u \circ \phi)_x| \leq K$$
for all $x\in B_1, \phi \in \Phi, u \in \mathcal{F}$.
\begin{defi}
A function $u:M \rightarrow \mathbb{R}$ is called $K$-semi concave (with linear modulus)
if there exists a $K$-bounded subset $\mathcal{F}_u$ of
$C^2(M,\mathbb{R})$ such that
$$u=\inf_{f \in \mathcal{F}_u}f.$$
The constant $K$ is  also called semi-concave constant. A function
$u:M \rightarrow \mathbb{R}$ is called $K$-semi-convex if $-u$ is
$K$-semi-concave.
\end{defi}

The following lemma is due to Fathi \cite{Fa3}, and we state it here
with slight modifications:

\begin{lem}
There exists a constant $K>0$, such that $u$ is $K$-semi-concave,
for each $u \in \mathcal{S}^-_{H}(\eta)$ or $-u \in
\mathcal{S}^+_{H}(\eta)$.
\end{lem}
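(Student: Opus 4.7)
The plan is to use that any backward weak KAM solution $u$ is a fixed point of $T^-_{H,\eta,t_0}$ for every $t_0 > 0$, which gives the representation
$$u(q) = \inf_{x \in M}\bigl\{u(x) + h^{t_0}_{H,\eta}(x,q)\bigr\}.$$
The heart of the argument is to exhibit $t_0 > 0$ and a constant $K > 0$, depending only on $H$, $\eta$ and the fixed atlas $\Phi$, such that for every $x \in M$ the function $q \mapsto h^{t_0}_{H,\eta}(x,q)$ is $K$-semi-concave on $M$. Once this is granted, $u$ is $K$-semi-concave as an infimum of a family of uniformly $K$-semi-concave functions.

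To establish this uniform semi-concavity of the action I would work in a chart $\phi \in \Phi$ around $q$. Superlinearity of $L_H$ and compactness of $M$ give a uniform speed bound for any minimizer of $h^{t_0}_{H,\eta}(x,\cdot)$, so for $t_0$ small enough the minimizer $\gamma$ joining $x$ to $q$ in time $t_0$ lies in a chart where $L_H$ is uniformly $C^2$ and uniformly fiberwise convex. Given $q$ and a small $\xi$ in local coordinates, I would compare $h^{t_0}(x,q+\xi)+h^{t_0}(x,q-\xi)$ with $2h^{t_0}(x,q)$ by using as competitor curves the minimizer $\gamma$ perturbed by a smooth time-profile reaching $\pm\xi$ at time $t_0$ and vanishing at time $0$. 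Expanding the action to second order and using the uniform $C^2$ bounds on $L_H$ and $\eta$, the first-order terms cancel by the Euler--Lagrange equation and the closedness of $\eta$, yielding a bound $h^{t_0}(x,q+\xi)+h^{t_0}(x,q-\xi)-2h^{t_0}(x,q) \leq K|\xi|^2$ with $K$ independent of $x$, which is exactly the required semi-concavity.

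The case $-u \in \mathcal{S}^+_H(\eta)$ is entirely symmetric. Writing the fixed point relation for $T^+_{H,\eta,t_0}$ as
$$u(q) = \inf_{y \in M}\bigl\{u(y) + h^{t_0}_{H,\eta}(q,y)\bigr\},$$
the analogous uniform semi-concavity of $q \mapsto h^{t_0}_{H,\eta}(q,y)$, proved by perturbing the minimizer from $q$ to $y$ near its left endpoint, gives $u$ the same semi-concavity constant $K$.

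The main obstacle is guaranteeing uniformity of $K$ in the auxiliary endpoint $x$ (resp.\ $y$). This hinges on three ingredients that are standard but essential: uniform a priori bounds on the speed of short-time minimizers coming from the Tonelli conditions; short-time absence of conjugate points, which ensures uniqueness and $C^2$-regularity of the minimizer connecting any two nearby endpoints; and uniform $C^2$ estimates on $L_H - \eta$ on the compact region of phase space where these minimizers are confined. With these in place the semi-concavity estimate reduces to a routine Taylor expansion along a single variational competitor.
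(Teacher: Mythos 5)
Your overall architecture is exactly the standard one — which is also all the paper does here: it gives no proof, attributing the lemma to Fathi \cite{Fa3}. Writing $u=T^-_{H,\eta,t_0}u$, proving semi-concavity of $q\mapsto h^{t_0}_{H,\eta}(x,q)$ with a constant uniform in $x$, and passing to the infimum is the right route, and it correctly yields a constant $K$ depending only on $H$, $\eta$, $t_0$ and the atlas, not on $u$; the reduction of the forward case via $u(q)=\inf_y\{u(y)+h^{t_0}_{H,\eta}(q,y)\}$ is also correct.

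There is, however, a genuine flaw in the uniformity step as you wrote it. You claim a speed bound for minimizers of $h^{t_0}_{H,\eta}(x,\cdot)$ that is uniform enough that ``for $t_0$ small the minimizer joining $x$ to $q$ lies in a chart.'' This fails twice: (a) fixed-endpoint minimizers in time $t_0$ have speeds of order $d(x,q)/t_0$, so the Tonelli a priori bound degenerates as $t_0\to 0$ and is not uniform in $t_0$; (b) more basically, the infimum in the Lax--Oleinik representation runs over \emph{all} $x\in M$, and a curve starting at a point $x$ outside the chart cannot lie in a chart around $q$ — shrinking $t_0$ makes this worse, not better. The standard repair is to fix $t_0$ (say $t_0=1$), invoke a priori compactness to get a speed bound $C$ for time-one minimizers uniform in $(x,q)$, and perform your two-sided perturbation only on a terminal subinterval $[1-\delta,1]$ with $C\delta$ smaller than the chart radius, so the comparison curves $\gamma(s)+\chi(s)\xi$ (with $\chi$ supported near $s=1$) stay in a fixed compact subset of $TM$; the first-order terms then cancel as you say and the quadratic remainder is uniform. (Alternatively, one may restrict attention to the $x$ attaining the infimum, whose minimizing segments are calibrated and hence have velocities in a compact set independent of $t_0$ — but that bound comes from the common Lipschitz constant of dominated functions, not from superlinearity applied to arbitrary fixed-endpoint minimizers, and must be said.) Two further small points: the short-time absence of conjugate points and uniqueness of minimizers is irrelevant for semi-concavity (it is needed for semi-convexity/$C^{1,1}$ statements, as in \cite{Be}), and to match the paper's definition of $K$-semi-concavity you should note that the perturbed-action functions $q\mapsto u(x_0)+\int_0^{t_0}(L_H-\eta+\alpha_H([\eta]))$ along the modified curves form a $K$-bounded family of $C^2$ functions touching $u$ from above, which is the routine conversion of your second-difference estimate.
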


So, if  $u$ is a backward weak KAM solution of $H_1$,  then $u$ is
semi-concave \cite{Fa3}. Hence, $u$ is differentiable at $q$ if
$D_{-}u(q) \neq \emptyset$. So, we have that  $ u$ is also a
viscosity supersolution of
$$H_2(q,\eta+d_qu)=\alpha_{H_2}([\eta]),$$
since $H_2|_{\overline{\eta+du}}=\alpha_{H_2}([\eta])$ \cite{Fa3}. Now
we have that $u$ is a viscosity solution of
$$H_2(q,\eta+d_qu)=\alpha_{H_2}([\eta]).$$
  Hence, $u$ is also a
backward weak KAM solution to Hamilton-Jacobi equation
 $$H_2(q,\eta+d_qu)=\alpha_{H_2}([\eta]),$$
by weak KAM theory \cite{Fa3}.

Similarly, if $u \in \mathcal{S}^{-}_{H_2}(\eta)$, then $u \in
\mathcal{S}^{-}_{H_1}(\eta)$.

Thus, $\mathcal{S}^{-}_{H_1}(\eta)=\mathcal{S}^{-}_{H_2}(\eta).$

Now we will show that
$\mathcal{S}^{+}_{H_1}(\eta)=\mathcal{S}^{+}_{H_2}(\eta).$  Before
we enter into the proof, we recall the definition of symmetrical
Hamiltonian. Let $H(q,p)$ be a Tonelli Hamiltonian, then the
symmetrical Hamiltonian (with respect to $\eta$) is defined as $\check{H}(q,\eta+p)=H(q,\eta-p)$.
\begin{lem}
$L_{\check{H}}(q,\dot q)-\eta(\dot q)=L_{H}(q,-\dot q)-\eta(-\dot q).$
\end{lem}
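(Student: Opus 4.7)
The plan is to unwind the Legendre transform carefully, using the defining relation for $\check{H}$ to reduce everything to $L_H$.

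First I would rewrite the definition of $\check{H}$ in a form that is easier to Legendre-transform. Setting $P=\eta+p$, so that $p=P-\eta$ and $\eta-p=2\eta-P$, the defining identity $\check{H}(q,\eta+p)=H(q,\eta-p)$ becomes
\[
\check{H}(q,P)=H(q,2\eta-P).
\]
This is just a reflection of the fiber variable across the covector $\eta$, and it preserves strict fiber convexity and superlinearity, so $\check{H}$ is again Tonelli and its Legendre transform is well-defined by the variational formula
\[
L_{\check{H}}(q,\dot q)=\sup_{P}\bigl(P\cdot\dot q-\check{H}(q,P)\bigr).
\]

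Next I would substitute the previous display into this supremum and change variables $P\mapsto p$ via $P=2\eta-p$, which is an affine bijection on the fiber for each $q$, hence preserves the supremum. This gives
\[
L_{\check{H}}(q,\dot q)=\sup_{p}\bigl((2\eta-p)\cdot\dot q-H(q,p)\bigr)=2\eta(\dot q)+\sup_{p}\bigl((-\dot q)\cdot p-H(q,p)\bigr)=2\eta(\dot q)+L_H(q,-\dot q).
\]
The last equality is the Legendre transform of $H$ evaluated at $-\dot q$, so no new computation is needed.

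Finally I would subtract $\eta(\dot q)$ from both sides. Since $\eta$ is a 1-form, it is linear in the tangent variable, so $\eta(-\dot q)=-\eta(\dot q)$, and the right-hand side becomes $L_H(q,-\dot q)+\eta(\dot q)=L_H(q,-\dot q)-\eta(-\dot q)$, which is exactly the desired identity. There is no real obstacle here; the only point that requires a moment's care is verifying that the affine change of variables $P=2\eta-p$ inside the supremum is legitimate (it is, because $\eta|_q$ is a fixed covector for each fixed $q$), and that the Legendre supremum characterization, rather than the differential definition via $\dot q=\partial H/\partial p$, is used so the calculation is purely algebraic.
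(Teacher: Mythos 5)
Your proof is correct. It differs from the paper's argument mainly in which definition of the Legendre transform it uses. The paper works with the differential (pointwise duality) definition: it writes $L_{\check H}(q,\dot q)-\eta\dot q=(p-\eta)\dot q-\check H(q,p)$ with $\dot q=\partial\check H(q,p)/\partial p$, performs the substitutions $p_1=p-\eta$ and $-p_2=\eta-p_1$, and then must explicitly verify the Legendre relation $-\dot q=\partial H(q,-p_2)/\partial(-p_2)$ in order to recognize $(-p_2)(-\dot q)-H(q,-p_2)$ as $L_H(q,-\dot q)$ --- the paper itself flags this last step as the one needing care. You instead use the Fenchel supremum characterization $L_{\check H}(q,\dot q)=\sup_P\bigl(P\cdot\dot q-\check H(q,P)\bigr)$ together with the rewriting $\check H(q,P)=H(q,2\eta-P)$ and the affine change of variables $P=2\eta-p$, so the identification with $L_H(q,-\dot q)$ is automatic and no differential relation between the dual variables has to be checked. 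The two routes compute the same thing; yours is purely algebraic and sidesteps the one delicate verification in the paper (at the mild cost of invoking the equivalence of the Fenchel and differential forms of the Legendre transform, which holds for Tonelli Hamiltonians since $\check H$, being a fiberwise reflection of $H$ across $\eta$, is again strictly convex and superlinear, as you note).
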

\begin{proof}In fact, we have
\begin{eqnarray*}
&&L_{\check{H}}(q,\dot q)-\eta \dot q\\
&=&(p-\eta)\dot q-\check{H}(q,p)\\
&=&p_1 \dot q-\check{H}(q,\eta+p_1) \,\,\, ( p_1=p-\eta)\\
&=& p_1\dot q-H(q, \eta-p_1)\\
&=& (p_2+\eta)\dot q-H(q,-p_2)\,\,\, (-p_2=\eta-p_1)\\
&=& (-p_2)(-\dot q)-H(q,-p_2)-\eta(-\dot q)\\
&=&L_H(q,-\dot q)-\eta(-\dot q),
\end{eqnarray*}
where $p$ and $\dot q$ are related by $\dot q=\frac{\partial
\check{H}(q,p)}{\partial p}$ in the first equality.  All the inequalities are obviously, except the last one. We must check the last equality, since where the Legendrian equality is used. Note that the relation
can also be expressed as
$$\dot q=\frac{\partial \check H(q,p)}{\partial p}=\frac{\partial {H}(q,-p+2\eta)}{\partial
p}=-\frac{\partial {H}(q,-p+2\eta)}{\partial (-p+2\eta)}.$$
  In other words,
$$-\dot q=\frac{\partial {H}(q,-p_2)}{\partial (-p_2)}.$$
 Hence, if
$\dot q=\mathcal{L}_{\check H}(p)$, then $-\dot q=\mathcal{L}_H(-p_2)$. Thus,
the lemma follows.
\end{proof}
 The Lagrangian $\check{L}_H:=L_{\check{H}}$ is also called
symmetrical Lagrangian to $L_H$. By the fundamental result of weak
KAM theorem, we have $u \in \mathcal{S}^{+}_{H}(\eta)$ if and only
if $-u \in \mathcal{S}^{-}_{\check{H}}(\eta)$. Hence, we only need
to show that
$\mathcal{S}^{-}_{\check{H}_1}=\mathcal{S}^{-}_{\check{H}_2}$, when
$\{H_1,H_2\}=0$. Clearly, $\{\check{H}_1,\check{H}_2\}=0$, whenever
$\{H_1,H_2\}=0$. Hence, it follows from the above discussions that
$\mathcal{S}^+_{H_1}(\eta)=\mathcal{S}^+_{H_2}(\eta).$

Thus, Theorem 2 is proved.

\section{Proof of Theorem 3}
\begin{defi}
For a Tonelli Hamiltonian $H$ and a smooth closed 1-form $\eta$, we
say $u_-\in \mathcal{S}^-_{H}(\eta)$ and $u_+\in
\mathcal{S}^+_{H}(\eta)$ are conjugate with respect to $H$ if
$u_-=u_+$ on the projected Mather set $M_{H,[\eta]}$. If $u_-$ and
$u_+$ are conjugate with respect to $H$, we also denote this
relation by $u_- \sim_H u_+$.
\end{defi}

Based on this definition, we can express equivalent definitions
\cite{Fa3} of $A_{H,c}$ and $N_{H,c}$ as follows:
$$A_{H,c}=\cap \Big{\{}q:u_-(q)=u_+(q), \text{ where } u_-\in
\mathcal{S}^-_{H}(\eta), u_+\in \mathcal{S}^+_{H}(\eta), u_- \sim_H
u_+\Big{\}}$$ and
$$N_{H,c}=\cup \Big{\{}q:u_-(q)=u_+(q), \text{ where } u_-\in
\mathcal{S}^-_{H}(\eta), u_+\in \mathcal{S}^+_{H}(\eta), u_- \sim_H
u_+\Big{\}}.$$ Consequently, we also have \cite{Fa3}
$$\stackrel{\scriptscriptstyle
\ast}{A}_{H,c}=\cap\Big{\{}(q,p)|p=d_qu_-=d_qu_+:u_-\in
\mathcal{S}^-_{H}(\eta), u_+\in \mathcal{S}^+_{H}(\eta), u_- \sim_H
u_+\Big{\}}$$ and
$$\stackrel{\scriptscriptstyle
\ast}{N}_{H,c}=\cup \Big{\{}(q,p)|p=d_qu_-=d_qu_+:u_-\in
\mathcal{S}^-_{H}(\eta), u_+\in \mathcal{S}^+_{H}(\eta), u_- \sim_H
u_+\Big{\}}.$$
\begin{prop}
If $\{H_1, H_2\}=0$, then $\stackrel{\scriptscriptstyle
\ast}M_{H_1,c} \subseteq \stackrel{\scriptscriptstyle
\ast}A_{H_2,c}, \stackrel{\scriptscriptstyle \ast} M_{H_2,c}
\subseteq \stackrel{\scriptscriptstyle \ast}A_{H_1,c}$, for any
cohomology class $c \in H^1(M, \mathbb{R})$.
\end{prop}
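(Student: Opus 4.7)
My plan is to use the intersection-over-conjugate-pairs characterization of $\stackrel{\scriptscriptstyle\ast}{A}_{H,c}$ displayed immediately above, combined with Theorem 2 and Mather's graph theorem for a single Tonelli Hamiltonian. Fix $(q_0,p_0)\in\stackrel{\scriptscriptstyle\ast}{M}_{H_1,c}$. Since $\stackrel{\scriptscriptstyle\ast}{M}_{H_1,c}\subseteq\stackrel{\scriptscriptstyle\ast}{A}_{H_1,c}$, we have $q_0\in A_{H_1,c}$, and by Mather's graph theorem $(q_0,p_0)$ is the unique element of $\stackrel{\scriptscriptstyle\ast}{A}_{H_1,c}$ above $q_0$. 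I claim this observation, pushed through Theorem 2, is enough to place $(q_0,p_0)$ inside the $H_2$-Aubry set; the symmetric inclusion follows by swapping $H_1$ and $H_2$.

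Take an arbitrary $H_2$-conjugate pair $(u_-,u_+)$: $u_-\in\mathcal{S}^-_{H_2}(\eta)$, $u_+\in\mathcal{S}^+_{H_2}(\eta)$, and $u_-\sim_{H_2}u_+$. By Theorem 2, $\mathcal{S}^-_{H_2}(\eta)=\mathcal{S}^-_{H_1}(\eta)$ and $\mathcal{S}^+_{H_2}(\eta)=\mathcal{S}^+_{H_1}(\eta)$, so $u_-$ and $u_+$ are also backward and forward weak KAM solutions for $H_1$, respectively. A standard theorem of Fathi then guarantees that both $u_-$ and $u_+$ are differentiable at every point of the projected Aubry set $A_{H_1,c}$, with $\eta|_q+d_qu_\pm$ equal to the unique point of the graph $\stackrel{\scriptscriptstyle\ast}{A}_{H_1,c}$ above $q$. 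Evaluating at $q_0$ gives
\[
p_0=\eta|_{q_0}+d_{q_0}u_-=\eta|_{q_0}+d_{q_0}u_+,
\]
so $(q_0,p_0)$ lies in the set attached to the pair $(u_-,u_+)$ in the characterization. Since the pair was arbitrary, $(q_0,p_0)\in\stackrel{\scriptscriptstyle\ast}{A}_{H_2,c}$.

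The step I expect to need the most care is the cross-Hamiltonian application of Fathi's differentiability theorem: one is citing a result about weak KAM solutions for $H_1$, applied to functions originally selected as weak KAM solutions for $H_2$, the bridge being Theorem 2. It is also worth verifying compatibility with Proposition 2.2, which tells us $H_2\equiv\alpha_{H_2}(c)$ on $\stackrel{\scriptscriptstyle\ast}{A}_{H_1,c}$, so that the cotangent vector $p_0$ determined via the $H_1$-graph indeed satisfies $H_2(q_0,p_0)=\alpha_{H_2}(c)$, ensuring no inconsistency with the $H_2$-theory. Once these points are recorded, interchanging $H_1$ and $H_2$ yields the second inclusion.
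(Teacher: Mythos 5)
The fiberwise half of your argument is fine: by Theorem 2 an $H_2$-conjugate pair $(u_-,u_+)$ consists of a backward and a forward weak KAM solution for $H_1$, both of which are dominated by $L_{H_1}-\eta+\alpha_{H_1}(c)$, so Fathi's differentiability theorem on the Aubry set applies and gives $\eta|_{q_0}+d_{q_0}u_-=\eta|_{q_0}+d_{q_0}u_+=p_0$ at $q_0\in M_{H_1,c}\subseteq A_{H_1,c}$. The gap is in what this buys you. In Fathi's characterization the set attached to a conjugate pair is $\tilde I(u_-,u_+)=\big\{(q,\eta_q+d_qu_-(q)):u_-(q)=u_+(q)\big\}$; the condition $u_-(q)=u_+(q)$ is essential (it appears explicitly in the projected formula displayed in the paper, and the cotangent display omitting it is only shorthand). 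Mere agreement of derivatives at a point is a weaker condition: it can hold off the Aubry set (for instance wherever the nonnegative semiconcave function $u_--u_+$ has an interior critical point of positive value at which both functions happen to be differentiable), and the intersection over conjugate pairs of these larger ``derivative-agreement'' sets is not a characterization of $\stackrel{\scriptscriptstyle\ast}{A}_{H_2,c}$ that you can cite. So you must also prove $u_-(q_0)=u_+(q_0)$ for every $H_2$-conjugate pair; but that statement, quantified over all pairs, is literally $q_0\in A_{H_2,c}$, i.e.\ the projected content of the proposition you are proving. You cannot appeal to Proposition 4.2 ($u_-\sim_{H_1}u_+\iff u_-\sim_{H_2}u_+$) to get it, since in the paper that proposition is deduced from this one. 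In effect your proposal settles only the ``which covector sits over $q_0$'' part and begs the question on the membership part.

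This missing step is exactly where the dynamics (beyond Theorem 2) must enter, and it is what the paper's proof supplies: using Sorrentino's result that $\stackrel{\scriptscriptstyle\ast}{M}_{H_1,c}$ is invariant under $\phi^t_{H_2}$, one shows via calibration by a common solution $u_-\in\mathcal{S}^-_{H_1}(\eta)=\mathcal{S}^-_{H_2}(\eta)$ that the $H_2$-orbit through $(q_0,p_0)$ projects to a $c$-minimizer for $L_{H_2}$, and then kills the barrier between its $\alpha$- and $\omega$-limit points by identifying $\rho_{H_2,c}$ with $\rho_{H_1,c}$ on these points (Corollary 4.1) and using Mather's estimate $\rho_{H_1,c}(q,q')\le C\,d(q,q')^2$ along the orbit, which lies in $M_{H_1,c}$. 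Some argument of this kind (or another genuine proof that $q_0$ lies in the contact set $\{u_-=u_+\}$ of every $H_2$-conjugate pair) is needed before your intersection argument can close.
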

\begin{proof}
We only need to show that $\stackrel{\scriptscriptstyle
\ast}M_{H_1,c} \subseteq \stackrel{\scriptscriptstyle \ast}A_{H_2,c}
$, by the symmetry of $H_1$ and $H_2$. By weak KAM theory
\cite{Fa3}, we have
$$\stackrel{\scriptscriptstyle
\ast}M_{H_1,c} \subseteq \cap_{u_- \in \mathcal{S}^-_{H_1}(\eta)}
\big{\{}\eta+du_-\big{\}}=\cap_{u_- \in\mathcal{S}^-_{H_2}(\eta)}
\big{\{}\eta+du_-\big{\}}.$$ By the result of Sorrentino \cite{So},
$\stackrel{\scriptscriptstyle \ast}M_{H_1,c}$ is also  invariant
under the flow of $\phi^t_{H_2}$, since $\{H_1,H_2\}=0$. Note that
$\stackrel{\scriptscriptstyle \ast}M_{H_1,c}$ lies in the graph of
$\eta+du_-$, for any $u_- \in \mathcal{S}^-_{H_1}(\eta)=
\mathcal{S}^-_{H_2}(\eta)$.

In \cite{Fa3}, Fathi proved the following lemma:
\begin{lem}
For any two points $q_0,q_1$, we have the following equality

$$h_{H,\eta}(q_0,q_1)=\sup\Big{\{}u_-(q_1)-u_+(q_0):u_-\in
\mathcal{S}^-_{H}(\eta), u_+\in \mathcal{S}^+_{H}(\eta), u_- \sim_H
u_+\Big{\}}.$$ Moreover, for any given $q_0,q_1 \in M$, this
supremum is actually attained.
\end{lem}
As a consequence of this lemma, together with the definition of
conjugate pair of weak KAM solutions,  we have
\begin{cor}
$$h_{H,\eta}(q_0,q_1)=\sup \Big{\{}u_-(q_1)-u_-(q_0):u_-\in
\mathcal{S}^-_{H}(\eta)\Big{\}}$$ for any two points $q_0,q_1 \in A_{H,c}$.
\end{cor}

We also need the definition of dominated function. Let $L_{H}$ be
the Lagrangian associated to Hamintonian $H$. Recall that a function
$f:M \rightarrow \mathbb{R}$ is dominated by $L_H$ if for each
absolutely continuous curve $\gamma:[a,b] \rightarrow M$,
we have
$$f(\gamma(b))-f(\gamma(a)) \leq \int^b_a
L_H(\gamma(t),\dot{\gamma}(t))dt.$$ If $f$ is dominated by $L_H$, we
denote it by $f \prec L_H$.

Choose any $(q_0,p_0) \in \stackrel{\scriptscriptstyle
\ast}M_{H_1,c}$, we will show that $(q_0,p_0) \in
\stackrel{\scriptscriptstyle \ast}A_{H_2,c}$. Fix a smooth closed 1-form $\eta$ with $[\eta]=c$.

Firstly, we will show that $\pi \circ \phi^t_{H_2}(q_0,p_0)$ is a
$c$-minimizer with respect to $L_{H_2}$. For any $t_1 < t_2 \in
\mathbb{R}$, and any absolutely continuous curve $\gamma_1:[t_1,t_2]
\rightarrow M$ with $$\gamma_1(t_1)=\pi \circ
\phi^{t_1}_{H_2}(q_0,p_0), \gamma_1(t_2)=\pi \circ
\phi^{t_2}_{H_2}(q_0,p_0),$$ we have
\begin{eqnarray*}
&&\int^{t_2}_{t_1}(L_{H_2}-\eta+\alpha_{H_2}([\eta]))(\pi \circ
\phi^t_{H_2}(q_0,p_0), \frac{d}{dt}({\pi \circ \phi}^t_{H_2}(q_0,p_0)))dt\\
&=&\int^{t_2}_{t_1}((p-\eta)dq) (\phi^t_{H_2}(q_0,p_0))\\
 &=&u_-(\pi \circ \phi^{t_2}_{H_2}(q_0,p_0))-u_-(\pi \circ
\phi^{t_1}_{H_2}(q_0,p_0))\\
&\leq & \int^{t_2}_{t_1}
(L_{H_2}-\eta+\alpha_{H_2}([\eta]))(\gamma_1(t),
\dot{\gamma}_1(t))dt,
\end{eqnarray*}
where $u_- \in
\mathcal{S}^-_{H_1}(\eta)(=\mathcal{S}^-_{H_2}(\eta))$; the first
equality follows from the fact that
$H_2(\phi^t_{H_2}(q_0,p_0))=\alpha_{H_2}([\eta])$; the second
equality follows from the fact that
 $\phi^t_{H_2}(q_0,p_0)$
lies in $\cap_{u_- \in\mathcal{S}^-_{H_2}(\eta)}
\big{\{}\eta+du_-\big{\}}$; the inequality follows from the fact
that $u_- \prec L_{H_2}-\eta+\alpha_{H_2}([\eta])$. Thus,
$$\stackrel{\scriptscriptstyle \ast}M_{H_1,c} \subseteq
\stackrel{\scriptscriptstyle \ast}N_{H_2,c}.$$ Hence, we only need
to show that $\rho_{H_2,c}(q_{\alpha},q_{\omega})=0$, for any
$q_{\alpha}$ lies in the $\alpha$-limit set and $q_{\omega}$ lies in
the $\omega$-limit set of $\pi \circ \phi^t_{H_2}(q_0,p_0)$.
Clearly, both  $q_{\alpha}$ and $q_{\omega}$ lie in $A_{H_2,c}$,
since $\pi \circ \phi^t_{H_2}(q_0,p_0)$ is c-minimizer with respect
to $L_{H_2}$. Thus, we can use the formula in Corollary 4.1 to
calculate $\rho_{H_2,c}(q_{\alpha},q_{\omega})$:
\begin{eqnarray*}
\rho_{H_2,c}(q_{\alpha},q_{\omega})&=&h_{H_2,\eta}(q_{\alpha},q_{\omega})+
h_{H_2,\eta}(q_{\omega},q_{\alpha})\\
&=&\sup \Big{\{}u_-(q_{\omega})-u_-(q_{\alpha}):u_-\in
\mathcal{S}^-_{H_2}(\eta)\Big{\}}\\&+&\sup
\Big{\{}v_-(q_{\alpha})-v_-(q_{\omega}):v_-\in
\mathcal{S}^-_{H_2}(\eta)\Big{\}}\\&=& \sup
\Big{\{}u_-(q_{\omega})-u_-(q_{\alpha}):u_-\in
\mathcal{S}^-_{H_1}(\eta)\Big{\}}\\&+&\sup
\Big{\{}v_-(q_{\alpha})-v_-(q_{\omega}):v_-\in
\mathcal{S}^-_{H_1}(\eta)\Big{\}}\\&=&
h_{H_1,\eta}(q_{\alpha},q_{\omega})+
h_{H_1,\eta}(q_{\omega},q_{\alpha})\\
&=&\rho_{H_1,c}(q_{\alpha},q_{\omega}),
\end{eqnarray*}
where, the second quality follows from Corollary 4.1; the third
equality follows from the fact that
$\mathcal{S}^-_{H_1}(\eta)=\mathcal{S}^-_{H_2}(\eta)$; the fourth
equality follows from Corollary 4.1, since $q_{\alpha}$ and $q_{\omega}$ also lie in $A_{H_1,c}$ by the invariance of $\stackrel{\scriptscriptstyle \ast}M_{H_1,c}$ under the flow $\phi^t_{H_2}$.

Now we claim that $\rho_{H_1,c}(q_{\alpha},q_{\omega})=0$.  Since
$q_{\alpha}$ and $q_{\omega}$ lie in the $\alpha$-limit set and
$\omega$-limit set of $\pi \circ \phi^t_{H_2}(q_0,p_0)$
respectively, there exists $t_i, t_k \rightarrow +\infty$ as $i, k
\rightarrow +\infty$, such that $$ \pi \circ
\phi^{-t_i}_{H_2}(q_0,p_0) \rightarrow q_{\alpha}, \,\,\,\,\, \pi
\circ \phi^{t_k}_{H_2}(q_0,p_0) \rightarrow q_{\omega}.$$  Now we
have $$\rho_{H_1,c}(\pi \circ \phi^{-t_i}_{H_2}(q_0,p_0),\pi \circ
\phi^{t_k}_{H_2}(q_0,p_0))=0,$$ since  $\pi \circ
\phi^{-t_i}_{H_2}(q_0,p_0)$ and $\pi \circ
\phi^{t_k}_{H_2}(q_0,p_0)$ can be connected by a $C^2$ curve $\pi
\circ \phi^t_{H_2}(q_0,p_0)$ which lies in $M_{H_1,c}$ and
$\rho_{H_1,c}$ satisfies
$$\rho_{H_1,c}(q_0,q_1) \leq Cd(q_0,q_1)^2$$ for each $q_0,q_1 \in
A_{H_1,c}$ \cite{Mat2}, where $C$ is a constant, and $d$ is the
distance induced by Riemannian metric. So,
$$\rho_{H_1,c}(q_{\alpha},q_{\omega})= 0,$$
by taking a limit.

Thus, Proposition 4.1 follows.
\end{proof}
\begin{prop}
Assume that $\{H_1, H_2\}=0$. Let $u_- \in
\mathcal{S}^-_{H_1}(\eta)=\mathcal{S}^-_{H_2}(\eta),$ $u_+ \in
\mathcal{S}^+_{H_1}(\eta)=\mathcal{S}^+_{H_2}(\eta).$ Then $u_-$ and
$u_+$ are conjugate with respect to $H_1$ if and only if $u_-$ and
$u_+$ are conjugate with respect to $H_2$, i.e., $u_- \sim_{H_1}u_+
\iff u_- \sim_{H_2}u_+.$
\end{prop}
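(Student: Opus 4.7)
The plan is to leverage the characterizations of the projected Aubry sets in terms of agreement sets of conjugate pairs (recalled just before Proposition 4.1) together with the inclusion $M_{H_1,c} \subseteq A_{H_2,c}$ (and its symmetric counterpart) obtained in Proposition 4.1. The whole assertion will reduce to a line once the right ingredients are stacked.

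First I record the following observation: from the identity
$$A_{H,c}=\cap \Big{\{}q:u_-(q)=u_+(q):u_-\in \mathcal{S}^-_{H}(\eta), u_+\in \mathcal{S}^+_{H}(\eta), u_- \sim_H u_+\Big{\}},$$
it follows immediately that if a pair $(u_-,u_+)$ satisfies $u_- \sim_H u_+$, then $u_-$ and $u_+$ must agree on the whole of $A_{H,c}$ (the set singled out by any particular conjugate pair contains the intersection over all conjugate pairs). So for conjugate pairs, agreement on the projected Mather set automatically upgrades to agreement on the projected Aubry set.

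Next I carry out the forward implication. Suppose $u_- \sim_{H_1} u_+$. By the observation above, $u_- = u_+$ on $A_{H_1,c}$. Proposition 4.1 (applied with the roles of $H_1$ and $H_2$ interchanged) gives $\stackrel{\scriptscriptstyle \ast}{M}_{H_2,c} \subseteq \stackrel{\scriptscriptstyle \ast}{A}_{H_1,c}$, hence by projection $M_{H_2,c} \subseteq A_{H_1,c}$. Therefore $u_- = u_+$ on $M_{H_2,c}$; since $u_- \in \mathcal{S}^-_{H_2}(\eta)$ and $u_+ \in \mathcal{S}^+_{H_2}(\eta)$ by Theorem 2, this is precisely the statement that $u_- \sim_{H_2} u_+$. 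The reverse implication follows by the completely symmetric argument: from $u_- \sim_{H_2} u_+$ one gets agreement on $A_{H_2,c}$, which contains $M_{H_1,c}$ by the other half of Proposition 4.1, whence $u_- \sim_{H_1} u_+$.

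There is no real obstacle here; the hard work was done in Theorem 2 (identification of the spaces of weak KAM solutions) and in Proposition 4.1 (the inclusion $\stackrel{\scriptscriptstyle \ast}{M}_{H_i,c} \subseteq \stackrel{\scriptscriptstyle \ast}{A}_{H_j,c}$ for $i\neq j$). The only subtlety to double-check is that the cited characterization $A_{H,c}=\cap\{q:u_-(q)=u_+(q)\}$ indeed yields pointwise agreement of any specific conjugate pair on $A_{H,c}$, but this is tautological once one unpacks the intersection.
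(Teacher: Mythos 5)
Your proof is correct and follows essentially the same route as the paper, which deduces the proposition directly from Proposition 4.1 together with the characterization $A_{H,c}=\cap\{q:u_-(q)=u_+(q)\}$ over conjugate pairs; you have merely spelled out the (tautological) step that any single conjugate pair agrees on the projected Aubry set, and then used the inclusions $M_{H_i,c}\subseteq A_{H_j,c}$ ($i\neq j$) exactly as intended.
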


\begin{proof}
It is a directly consequence of Proposition 4.1  and the fact
\cite{Fa3} that
$$A_{H,c}=\cap \big{\{}q: u_-(q)=u_+(q), \text{ here } u_-\in
\mathcal{S}^-_{H}(\eta), u_+\in \mathcal{S}^+_{H}(\eta), u_- \sim_H
u_+\big{\}}.$$
\end{proof}

In \cite{Fa1},\cite{Fa3}, Fathi showed that
\begin{lem}
$$B_{H,c}(q)=\sup \Big{\{}u_{-}(q)-u_{+}(q):u_-\in
\mathcal{S}^-_{H}(\eta), u_+\in \mathcal{S}^+_{H}(\eta), u_- \sim_H
u_+\Big{\}},$$ and, moreover, the supremum is attained  for each
$q$.
\end{lem}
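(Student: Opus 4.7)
The plan is to derive Lemma~4.2 as an immediate specialization of Lemma~4.1. By the definition recalled in Section~1, the first barrier function satisfies
$$B_{H,c}(q) = h_{H,\eta}(q,q),$$
so it is precisely the Peierls value $h_{H,\eta}(q_0,q_1)$ evaluated along the diagonal $q_0 = q_1 = q$. The admissible set in the supremum of Lemma~4.1, namely the conjugate pairs $(u_-, u_+)$ with $u_- \in \mathcal{S}^-_H(\eta)$, $u_+ \in \mathcal{S}^+_H(\eta)$, $u_- \sim_H u_+$, depends only on $H$ and $\eta$ and not on the two base points. Consequently, substituting $q_0 = q_1 = q$ into Lemma~4.1 yields exactly the identity of Lemma~4.2, and the maximizing pair produced by Lemma~4.1 at $(q,q)$ serves as the maximizer here.

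No extra approximation step is needed when collapsing $(q_0,q_1)$ to $(q,q)$: the arguments of $u_-$ and $u_+$ simply coincide, and the conjugacy condition $u_- \sim_H u_+$ (agreement on the projected Mather set $M_{H,c}$) is unaffected. Hence both the equality and the attainment statement transfer verbatim from Lemma~4.1.

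If one wished to work from scratch rather than invoke Lemma~4.1, the natural strategy is as follows. For the inequality $\leq$, given any conjugate pair $(u_-, u_+)$, pick backward- and forward-calibrated curves $\gamma_-, \gamma_+$ for $u_-$ and $u_+$ passing through $q$ at time $0$; their $\alpha$- and $\omega$-limit sets lie in the Aubry set, and on those limit sets one has $u_- = u_+$ by conjugacy. Closing up with a short bridge between these limit sets (available because $\rho_c$ vanishes on the Mather set) produces a long loop at $q$ whose $(L_H - \eta + \alpha_H)$-action equals $u_-(q) - u_+(q)$ plus error terms that are forced to vanish along the asymptotic limit; the infimum defining $h_{H,\eta}(q,q)$ then supplies the bound. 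For the inequality $\geq$, one constructs an explicit conjugate pair saturating the supremum by taking $u_-(\cdot) = \inf_{q' \in A_{H,c}}(u_0(q') + h_{H,\eta}(q', \cdot))$ for a suitable starting function $u_0$ and pairing it with the forward weak KAM solution naturally dual to it. The principal obstacle in this self-contained route is the attainment statement, which requires a compactness argument (Arzel\`a--Ascoli applied to the uniformly Lipschitz family of weak KAM solutions, cf.\ Lemma~3.1) together with the stability of conjugacy under uniform limits.
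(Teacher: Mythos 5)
Your proposal is correct: since $B_{H,c}(q)=h_{H,\eta}(q,q)$ by definition, both the identity and the attainment statement are exactly the diagonal case $q_0=q_1=q$ of Lemma 4.1, and nothing further is required. The paper gives no independent argument for this lemma---it simply cites Fathi \cite{Fa1}, \cite{Fa3}---so your reduction to Lemma 4.1 is essentially the same (and the natural) route; the appended ``from scratch'' sketch is superfluous and looser, but the main argument stands on its own.
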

 Recall that
\begin{eqnarray*}
b_{H,c}(q)&=&\inf_{\xi,\zeta \in A_{H,c}}\Big{\{}h_{H,\eta}(\xi,
q)+h_{H,\eta}(q,\zeta) -h_{H,\eta}(\xi,\zeta)\Big{\}}\\
&\Big{(}&=\min_{\xi,\zeta \in A_{H,c}}\Big{\{}h_{H,\eta}(\xi,
q)+h_{H,\eta}(q,\zeta) -h_{H,\eta}(\xi,\zeta)\Big{\}}\Big{)}.
\end{eqnarray*}

 In fact, the following lemma also appeared in \cite{Fa1}:

\begin{lem}
$$b_{H,c}(q)=\inf\Big{\{}u_{-}(q)-u_{+}(q):u_-\in
\mathcal{S}^-_{H}(\eta), u_+\in \mathcal{S}^+_{H}(\eta), u_- \sim_H
u_+\Big{\}}.$$ Moreover,  the infimum is attained for each $q$.
\end{lem}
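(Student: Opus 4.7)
The plan is to mirror the structure of Fathi's proof of Lemma 4.2 (the corresponding \emph{supremum} formula for $h_{H,\eta}$), establishing the two inequalities separately and then extracting attainment from a compactness argument.

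To prove the upper bound $b_{H,c}(q)\le\inf\{u_-(q)-u_+(q):u_-\sim_H u_+\}$, I would fix an arbitrary conjugate pair $(u_-,u_+)$ and manufacture $\xi,\zeta\in A_{H,c}$ depending on it. Because $u_-$ is a backward weak KAM solution, there is a $u_-$-calibrated curve $\gamma^-:(-\infty,0]\to M$ with $\gamma^-(0)=q$ whose $\alpha$-limit set lies in $A_{H,c}$; I would pick $\xi$ in this $\alpha$-limit set. A limiting argument along times $t_i\to-\infty$ with $\gamma^-(t_i)\to\xi$, combined with the domination relation $u_-\prec L_H-\eta+\alpha_H([\eta])$ and the Lemma 4.2 lower bound, should give the equality $h_{H,\eta}(\xi,q)=u_-(q)-u_+(\xi)$ (using $u_-(\xi)=u_+(\xi)$ since $\xi\in A_{H,c}$). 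An analogous $u_+$-calibrated curve starting at $q$ produces $\zeta\in A_{H,c}$ with $h_{H,\eta}(q,\zeta)=u_-(\zeta)-u_+(q)$. Applying the Lemma 4.2 lower bound $h_{H,\eta}(\xi,\zeta)\ge u_-(\zeta)-u_+(\xi)$ for this same pair then yields
$$h_{H,\eta}(\xi,q)+h_{H,\eta}(q,\zeta)-h_{H,\eta}(\xi,\zeta)\le u_-(q)-u_+(q),$$
and infimizing over $\xi,\zeta\in A_{H,c}$ delivers the upper bound.

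For the reverse inequality, I would fix arbitrary $\xi,\zeta\in A_{H,c}$ and invoke the attainment clause of Lemma 4.2 to choose a conjugate pair $(v_-,v_+)$ realizing $h_{H,\eta}(\xi,\zeta)=v_-(\zeta)-v_+(\xi)$. Combining this equality with the Lemma 4.2 lower bounds $h_{H,\eta}(\xi,q)\ge v_-(q)-v_+(\xi)$ and $h_{H,\eta}(q,\zeta)\ge v_-(\zeta)-v_+(q)$ for the very same pair gives
$$h_{H,\eta}(\xi,q)+h_{H,\eta}(q,\zeta)-h_{H,\eta}(\xi,\zeta)\ge v_-(q)-v_+(q)\ge\inf\{u_-(q)-u_+(q):u_-\sim_H u_+\},$$
and infimizing over $\xi,\zeta$ yields $b_{H,c}(q)\ge\inf\{u_-(q)-u_+(q)\}$.

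For the attainment claim, by compactness of $A_{H,c}$ and continuity of $h_{H,\eta}$, the infimum defining $b_{H,c}(q)$ is realized at some $(\xi^*,\zeta^*)\in A_{H,c}\times A_{H,c}$. The conjugate pair $(v_-^*,v_+^*)$ produced from $(\xi^*,\zeta^*)$ by the construction above will satisfy $v_-^*(q)-v_+^*(q)\le b_{H,c}(q)$ (from the upper bound already proved) and $v_-^*(q)-v_+^*(q)\ge b_{H,c}(q)$ (from the reverse-inequality construction, since the \emph{equality} for $h_{H,\eta}(\xi^*,\zeta^*)$ was used), and hence realizes the infimum. I expect the main obstacle to be the technical identity $h_{H,\eta}(\xi,q)=u_-(q)-u_+(\xi)$ along an $\alpha$-limit point: the $\ge$ direction is immediate from Lemma 4.2, while the $\le$ direction requires carefully passing to the limit in $h^{T_i}_{H,\eta}(\gamma^-(-T_i),q)$ as $\gamma^-(-T_i)\to\xi$, invoking continuity of the Peierls barrier at Aubry points. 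Everything else reduces to algebraic manipulation with Lemma 4.2.
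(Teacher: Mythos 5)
Your proposal is correct and follows essentially the same route as the paper: both halves rest on Fathi's supremum formula for $h_{H,\eta}$ together with its attainment clause, the $\geq$ direction and the attainment being extracted from the conjugate pair realizing $h_{H,\eta}(\xi_0,\zeta_0)$ at a minimizing pair $(\xi_0,\zeta_0)\in A_{H,c}\times A_{H,c}$; the only difference is that for the $\leq$ direction the paper quotes the representation formulas $u_-(q)=\min_{\xi\in A_{H,c}}\{u_-(\xi)+h_{H,\eta}(\xi,q)\}$ and $u_+(q)=\max_{\zeta\in A_{H,c}}\{u_+(\zeta)-h_{H,\eta}(q,\zeta)\}$, whereas you re-derive exactly that content by taking $\xi,\zeta$ to be $\alpha$- and $\omega$-limit points of calibrated curves, i.e.\ the same fact unpacked. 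One trivial slip: in your attainment paragraph the two parenthetical attributions are swapped (the already-proved upper bound gives $v_-^*(q)-v_+^*(q)\geq b_{H,c}(q)$, while the reverse-inequality computation at $(\xi^*,\zeta^*)$ gives $v_-^*(q)-v_+^*(q)\leq b_{H,c}(q)$), but both inequalities are indeed available, so the conclusion stands.
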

Since we can not find an explicit proof of this lemma in the
literature, we will give a proof for the completeness.

\begin{proof}
For any conjugate pair $u_- \in \mathcal{S}^-_{H}(\eta), u_+ \in
\mathcal{S}^+_{H}(\eta), u_- \sim_H u_+$, we have
\begin{eqnarray*}
b_{H,c}(q)&=&\min_{\xi,\zeta}\Big{\{}h_{H,\eta}(\xi,
q)+h_{H,\eta}(q,\zeta)
-h_{H,\eta}(\xi,\zeta):\xi,\zeta \in A_{H,c}\Big{\}}\\
&\leq &\min_{\xi,\zeta}\Big{\{}h_{H,\eta}(\xi,
q)+h_{H,\eta}(q,\zeta)-(u_-(\zeta)-u_+(\xi)): \xi,\zeta \in A_{H,c} \Big{\}}\\
&=&\min_{\xi,\zeta}\Big{\{}\big{(}u_-(\xi)+h_{H,\eta}(\xi,
q)\big{)}-\big{(}u_+(\zeta)-h_{H,\eta}(q,\zeta)\big{)}: \xi,\zeta
\in
A_{H,c}\Big{\}}\\
&=& \min_{\xi \in
A_{H,c}}\Big{\{}u_-(\xi)+h_{H,\eta}(\xi,q)\Big{\}}-
\max_{\zeta \in A_{H,c}}\Big{\{}u_+(\zeta)-h_{H,\eta}(q,\zeta)\Big{\}}\\
&=&u_-(q)-u_+(q),
\end{eqnarray*}
where the  inequality follows from the fact \cite{Fa3} that
$$u_-(\zeta)-u_+(\xi) \leq h_{H,\eta}(\xi,\zeta),$$
the last equality follows from the constructions of weak KAM
solutions \cite{CIS}, \cite{Co}, \cite{Fa3}: $$u_-(q)=\min_{\xi \in
A_{H,c}}\Big{\{}u_-(\xi)+h_{H,\eta}(\xi,q)\Big{\}}$$ and
$$u_+(q)=\max_{\zeta \in A_{H,c}}\Big{\{}u_+(\zeta)-h_{H,\eta}(q,\zeta)\Big{\}}.$$
Thus, we have
$$b_{H,c}(q)\leq \inf\Big{\{}u_{-}(q)-u_{+}(q):u_-\in
\mathcal{S}^-_{H}(\eta), u_+\in \mathcal{S}^+_{H}(\eta), u_- \sim_H
u_+\Big{\}}.$$

 Next, we will show that
 $$b_{H,c}(q)= \inf\Big{\{}u_{-}(q)-u_{+}(q):u_-\in
\mathcal{S}^-_{H}(\eta), u_+\in \mathcal{S}^+_{H}(\eta), u_- \sim_H
u_+\Big{\}}.$$
 Let $\xi_0,\zeta_0 \in A_{H,c}$ such that

 $$b_c(q)=h_{H,\eta}(\xi_0, q)+h_{H,\eta}(q,\zeta_0)
-h_{H,\eta}(\xi_0,\zeta_0).$$ Then there exists a conjugate pair
$u_- \in \mathcal{S}^-_{H}(\eta), u_+ \in \mathcal{S}^+_{H}(\eta),
u_- \sim_H u_+$, such that
 \begin{eqnarray*}
 b_c(q)&=&h_{H,\eta}(\xi_0, q)+h_{H,\eta}(q,\zeta_0)
-h_{H,\eta}(\xi_0,\zeta_0)\\
 &=&\big{(}h_{H,\eta}(\xi_0,
q)+h_{H,\eta}(q,\zeta_0)\big{)}-\big{(}u_-(\zeta_0)-u_+(\xi_0)\big{)}\\
&=&\Big{(}u_-(\xi_0)+h_{H,\eta}(\xi_0,
q)\Big{)}-\Big{(}u_+(\zeta_0)-h_{H,\eta}(q, \zeta_0)\Big{)}\\
&\geq& u_-(q)-u_+(q),
 \end{eqnarray*}
where the second inequality follows from Lemma 4.1 (where the supremum is obtained); the third equality
follows from the fact that $\xi_0,\zeta_0 \in A_{H,c}$ and $u_-=u_+$ on
$A_{H,c}$ for each conjugate pair $u_-$ and $u_+$; and the
inequality follows from the constructions of weak KAM solutions.

Thus, Lemma  4.3 follows.
\end{proof}

Now Theorem 3 follows from Theorem 1, Proposition 4.2, Lemma 4.2,
and Lemma 4.3.

\begin{cor}
$b_{H,c}$ is semi-concave.
\end{cor}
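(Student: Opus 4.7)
The plan is to combine Lemma 4.3, which represents $b_{H,c}$ as an infimum, with the semi-concavity lemma (the unnamed Lemma in Section 3 attributed to Fathi) applied to both weak KAM solutions. The payoff is immediate because the definition of semi-concavity given in the paper is itself an ``infimum of a bounded family of $C^2$ functions'' definition, and infima interact well with this structure.

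First I would invoke Lemma 4.3 to write
\[
b_{H,c}(q)=\inf\bigl\{u_-(q)-u_+(q):u_-\in\mathcal{S}^-_H(\eta),\ u_+\in\mathcal{S}^+_H(\eta),\ u_-\sim_H u_+\bigr\}.
\]
Then I would apply the semi-concavity lemma to obtain a uniform constant $K>0$ such that every $u_-\in\mathcal{S}^-_H(\eta)$ is $K$-semi-concave and, since the same lemma covers $u$ with $-u\in\mathcal{S}^+_H(\eta)$, every $u_+\in\mathcal{S}^+_H(\eta)$ is $K$-semi-convex, i.e.\ $-u_+$ is $K$-semi-concave.

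Next I would write $u_-=\inf_{f\in\mathcal{F}_{u_-}}f$ and $-u_+=\inf_{g\in\mathcal{F}_{-u_+}}g$ using $K$-bounded families $\mathcal{F}_{u_-},\mathcal{F}_{-u_+}\subset C^2(M,\mathbb{R})$. Then
\[
u_-(q)-u_+(q)=\inf_{(f,g)\in\mathcal{F}_{u_-}\times\mathcal{F}_{-u_+}}\bigl(f(q)+g(q)\bigr),
\]
and each $f+g$ is $C^2$ with $|d^2((f+g)\circ\phi)_x|\le 2K$ for all $x\in B_1$ and $\phi\in\Phi$. Hence each $u_--u_+$ is $2K$-semi-concave by the very definition adopted in the paper.

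Finally I would take the infimum over all conjugate pairs: $b_{H,c}$ is the infimum of the union
\[
\bigcup_{u_-\sim_H u_+}\bigl(\mathcal{F}_{u_-}+\mathcal{F}_{-u_+}\bigr)\subset C^2(M,\mathbb{R}),
\]
which is still a $2K$-bounded family. By the definition of semi-concavity, $b_{H,c}$ is $2K$-semi-concave. There is no real obstacle here; the only mildly delicate point is to notice that the uniform bound $K$ from the semi-concavity lemma is exactly what ensures the $2K$-bound survives taking the infimum over the (a priori large) collection of conjugate pairs.
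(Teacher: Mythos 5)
Your proposal is correct and follows essentially the same route as the paper: represent $b_{H,c}$ via Lemma 4.3 as an infimum of $u_--u_+$ over conjugate pairs, use the uniform constant $K$ from Lemma 3.1, and conclude by the stability of semi-concavity under infima with a fixed constant. The only difference is that you unpack explicitly (via the $K$-bounded $C^2$ families and the constant $2K$ for the sum) the two standard facts the paper simply cites, which is a harmless elaboration.
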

\begin{proof}
By Lemma 3.1, we have $u_--u_+$ is $K$-semi-concave for each
conjugate pair
$$u_- \in \mathcal{S}^-_{H}(\eta), u_+ \in
\mathcal{S}^+_{H}(\eta), u_- \sim_H u_+.$$  It should be stressed
that $K$ is independent of the choice of conjugate pair. The
corollary follows from the fact that the infimum of a family of
semi-concave functions with the same semi-concave constant  is also
semi-concave.
\end{proof}
\begin{cor}
$c \rightarrow b_{H,c}$ is lower-semi-continuous. As a consequence,
$c \rightarrow N_{H,c}$ is upper-semi-continuous, as a set-valued
function \cite{Mat2}.
\end{cor}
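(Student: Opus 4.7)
The plan is to prove lower semi-continuity of $c \mapsto b_{H,c}(q)$ for each fixed $q$ via the conjugate-pair formula of Lemma 4.3, and then to deduce the upper semi-continuity of $c \mapsto N_{H,c}$ as a set-valued function by combining this with the uniform semi-concavity furnished by Corollary 4.3 and the identity
$$N_{H,c} = \{q\in M : b_{H,c}(q)=0\},$$
which is valid because $b_{H,c}\geq 0$ (by subadditivity $h_{H,\eta}(\xi,\zeta)\leq h_{H,\eta}(\xi,q)+h_{H,\eta}(q,\zeta)$) and the infimum in Lemma 4.3 is attained.

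For lower semi-continuity, fix $q\in M$ and $c_n\to c$. Choose smooth closed $1$-forms $\eta_n, \eta$ with $[\eta_n]=c_n$, $[\eta]=c$, and $\eta_n\to\eta$ in $C^0$. Passing to a subsequence that realizes $\liminf_n b_{H,c_n}(q)$, Lemma 4.3 gives conjugate pairs $u_-^{(n)}\sim_H u_+^{(n)}$ with respect to $\eta_n$ realizing $b_{H,c_n}(q)=u_-^{(n)}(q)-u_+^{(n)}(q)$. Normalizing $u_-^{(n)}(q)=0$ and using that weak KAM solutions of a Tonelli Hamiltonian are equi-Lipschitz with a constant that depends only on a uniform bound for $\eta_n$ (together with the continuity of $\alpha_H$ in the cohomology class), an Arzel\`a--Ascoli extraction yields uniform limits $u_\pm^{(n_k)}\to u_\pm$. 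Fathi's stability result identifies $u_-\in\mathcal{S}^-_H(\eta)$ and $u_+\in\mathcal{S}^+_H(\eta)$.

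The technical heart of the argument is showing that the conjugacy relation survives the limit. The equality $u_-^{(n)}=u_+^{(n)}$ holds on $M_{H,c_n}$; by the classical upper semi-continuity of $c\mapsto M_{H,c}$ from Mather's theory, every point of $M_{H,c}$ is a limit of points in $M_{H,c_n}$, so the uniform convergence forces $u_-=u_+$ on $M_{H,c}$, i.e.\ $u_-\sim_H u_+$. Applying Lemma 4.3 once more gives
$$b_{H,c}(q)\leq u_-(q)-u_+(q)=\lim_{k} b_{H,c_{n_k}}(q)=\liminf_n b_{H,c_n}(q),$$
which is the announced lower semi-continuity.

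For the set-valued consequence, assume $c_n\to c$ and $q_n\in N_{H,c_n}$ with $q_n\to q$. Then $b_{H,c_n}(q_n)=0$, and the uniform semi-concavity constant provided by Corollary 4.3 yields a uniform local Lipschitz bound near $q$, so $b_{H,c_n}(q)\to 0$ as well. Combining with the lower semi-continuity just established forces $b_{H,c}(q)\leq 0$, and since $b_{H,c}\geq 0$ we conclude $q\in N_{H,c}$. The principal obstacle, as noted, is the passage of the conjugacy $\sim_H$ through the limit, which is the step that genuinely needs the classical upper semi-continuity of the projected Mather set $c\mapsto M_{H,c}$; I would quote this from Mather's work rather than reprove it.
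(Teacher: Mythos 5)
Your overall strategy (realize $b_{H,c_n}(q)$ by conjugate pairs via Lemma 4.3, extract uniform limits, use stability of weak KAM/viscosity solutions, then deduce the set-valued statement from $N_{H,c}=\{b_{H,c}=0\}$ and equi-Lipschitz bounds) is the same as the paper's, which simply invokes Lemma 4.3 and stability of viscosity solutions. But the step you single out as the technical heart contains a genuine error. The property you need and use --- ``every point of $M_{H,c}$ is a limit of points in $M_{H,c_n}$'' --- is \emph{lower} semi-continuity of $c\mapsto M_{H,c}$, not upper semi-continuity, and it is not a classical fact; indeed it is false in general. Take $H=\tfrac12|p|^2+V(q)$ on $\mathbb{T}^2$ with $V\le 0$ vanishing exactly on the two circles $\Gamma_1=\{q_2=0\}$ and $\Gamma_2=\{q_1=0\}$. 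For small $c=(c_1,c_2)$ with $|c_1|>|c_2|>0$ the $c$-minimal measures live only on $\Gamma_1$ (traversed with speed $|c_1|$), so $M_{H,c}=\Gamma_1$, whereas for $c=(\epsilon,\epsilon)$ one has $M_{H,c}=\Gamma_1\cup\Gamma_2$; along $c_n=(\epsilon+\tfrac1n,\epsilon)\to(\epsilon,\epsilon)$ no point of $\Gamma_2\setminus\Gamma_1$ is a limit of points of $M_{H,c_n}$. (The Mather set is not upper semi-continuous either: for the pendulum at the boundary of the flat piece of $\alpha_H$, the nearby projected Mather sets are the whole circle while the limit one is a single fixed point; what is classical in Mather's work is upper semi-continuity of the Ma\~n\'e set.) So your argument that the limit pair $(u_-,u_+)$ is conjugate, i.e.\ that $u_-=u_+$ on $M_{H,c}$, does not go through as written.

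The gap can be closed without any continuity of Mather sets, staying within the paper's ``Lemma 4.3 plus stability'' scheme. From $u_+^{(n)}\le u_-^{(n)}$ (valid for every conjugate pair) the uniform limits satisfy $u_+\le u_-$ with $u_-\in\mathcal{S}^-_{H}(\eta)$, $u_+\in\mathcal{S}^+_{H}(\eta)$ by stability. Now replace $u_+$ by the conjugate $\hat u_+$ of $u_-$: by Fathi, $\hat u_+=\lim_{t\to\infty}T^+_{H,\eta,t}u_-$ exists, lies in $\mathcal{S}^+_{H}(\eta)$, is conjugate to $u_-$, and is the largest forward weak KAM solution below $u_-$ (monotonicity of $T^+_{H,\eta,t}$ gives $v_+=T^+_{H,\eta,t}v_+\le T^+_{H,\eta,t}u_-\to\hat u_+$ for any $v_+\in\mathcal{S}^+_H(\eta)$ with $v_+\le u_-$). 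Hence $u_+\le\hat u_+$ and
$b_{H,c}(q)\le u_-(q)-\hat u_+(q)\le u_-(q)-u_+(q)=\liminf_n b_{H,c_n}(q)$,
which is the desired lower semi-continuity. The remaining points of your write-up are fine modulo routine uniformity remarks: the Arzel\`a--Ascoli extraction needs equi-Lipschitz and equi-bounded families, which follow from a uniform bound on $\eta_n$ and continuity of $\alpha_H$ after normalizing the pairs by a common constant (subtracting the same constant from $u_-^{(n)}$ and $u_+^{(n)}$ preserves conjugacy); and for the set-valued conclusion you do not need Corollary 4.3 at all, since $b_{H,c_n}$ is an infimum of the equi-Lipschitz functions $u_-^{(n)}-u_+^{(n)}$ and is therefore equi-Lipschitz, which already gives $b_{H,c_n}(q_n)-b_{H,c_n}(q)\to 0$.
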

\begin{proof}
It is a direct consequence of Lemma 4.3 and stability of viscosity
solutions \cite{Fa3}.
\end{proof}
Clearly, we also have
\begin{cor}
$(\bar{A}_{H_1,c},\rho_{H_1,c})$ and $(\bar{A}_{H_2,c},\rho_{H_2,c})$ are isometric, for any $c \in H^1(M,\mathbb{R})$.
\end{cor}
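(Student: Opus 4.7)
The plan is to reduce the statement to the equality of the underlying pseudo-metric spaces, so that the identity map will descend to the isometry. First, by Theorem 4, I know that the Aubry sets in the cotangent bundle coincide: $\stackrel{\scriptscriptstyle \ast}{A}_{H_1,c}=\stackrel{\scriptscriptstyle \ast}{A}_{H_2,c}$. Projecting to $M$ via $\pi$ gives $A_{H_1,c}=A_{H_2,c}$. Thus the two pseudo-metric spaces in question are defined on the same underlying set, call it $A_c$.

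Next I would show that the two pseudo-metrics agree on $A_c \times A_c$. Fix a smooth closed $1$-form $\eta$ with $[\eta]=c$. By Theorem 2, $\mathcal{S}^-_{H_1}(\eta)=\mathcal{S}^-_{H_2}(\eta)$. Applying Corollary 4.1 for each $H_i$ on $A_c$, I get, for any $q_0,q_1\in A_c$,
\begin{equation*}
h_{H_i,\eta}(q_0,q_1)=\sup\bigl\{u_-(q_1)-u_-(q_0):u_-\in \mathcal{S}^-_{H_i}(\eta)\bigr\},\qquad i=1,2,
\end{equation*}
and since the supremum is taken over the same set of functions, the two right-hand sides are identical. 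Therefore $h_{H_1,\eta}=h_{H_2,\eta}$ on $A_c\times A_c$, and by the definition $\rho_{H,c}(q_0,q_1)=h_{H,\eta}(q_0,q_1)+h_{H,\eta}(q_1,q_0)$ we obtain $\rho_{H_1,c}=\rho_{H_2,c}$ on $A_c\times A_c$.

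It follows that the equivalence relations $\sim_{\rho_{H_1,c}}$ and $\sim_{\rho_{H_2,c}}$ on $A_c$ coincide, and the two quotient metric spaces $(\bar{A}_{H_1,c},\rho_{H_1,c})$ and $(\bar{A}_{H_2,c},\rho_{H_2,c})$ are literally the same metric space; in particular the identity on $A_c$ induces an isometry between them. There is no genuine obstacle here: the result is a packaging of Theorem 2, Theorem 4 and Corollary 4.1, the only point requiring any care being the verification that Corollary 4.1 is applicable, which holds because $A_{H_1,c}=A_{H_2,c}$ is the common projected Aubry set for both Hamiltonians.
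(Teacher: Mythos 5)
Your proof is correct and is essentially the argument the paper intends: the identity $\rho_{H_1,c}=\rho_{H_2,c}$ on the common projected Aubry set via Corollary 4.1 and $\mathcal{S}^-_{H_1}(\eta)=\mathcal{S}^-_{H_2}(\eta)$ is exactly the computation already carried out inside the proof of Proposition 4.1, combined with Theorem 4 to identify the underlying sets. Nothing further is needed.
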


\section{Proof of Theorem 4}
Since
$$\stackrel{\scriptscriptstyle
\ast}{A}_{H,c}=\cap\Big{\{}(q,p):p=d_qu_-=d_qu_+,u_-\in
\mathcal{S}^-_{H}(\eta), u_+\in \mathcal{S}^+_{H}(\eta), u_- \sim_H
u_+\Big{\}}$$ and
$$\stackrel{\scriptscriptstyle
\ast}{N}_{H,c}=\cup \Big{\{}(q,p):p=d_qu_-=d_qu_+, u_-\in
\mathcal{S}^-_{H}(\eta), u_+\in \mathcal{S}^+_{H}(\eta), u_- \sim_H
u_+\Big{\}}.$$

 Theorem 4 follows from Theorem 2 and Proposition 4.2.

\section{Proof of Theorem 5}
Firstly, note that $\{H_1+H_2,H_1\}=\{H_1+H_2, H_2\}=0$, if
$\{H_1,H_2\}=0$. So, by Theorem 3, we have
$$\stackrel{\scriptscriptstyle
\ast}{A}_{H_1+H_2,c}=\stackrel{\scriptscriptstyle
\ast}{A}_{H_1,c}=\stackrel{\scriptscriptstyle \ast}{A}_{H_2,c}.$$

Now chose any point $(q_0,p_0) \in \stackrel{\scriptscriptstyle
\ast}{A}_{H_1+H_2,c}=\stackrel{\scriptscriptstyle
\ast}{A}_{H_1,c}=\stackrel{\scriptscriptstyle \ast}{A}_{H_2,c}$,
then
$$\alpha_{H_1+H_2}(c)=(H_1+H_2)(q_0,p_0)=H_1(q_0,p_0)+
H_2(q_0,p_0)=\alpha_{H_1}(c)+\alpha_{H_2}(c).$$

By the quasi-linearity of $\alpha$-function, we have
\begin{rem}
In Theorem 1, if we let $s=r$, it is easy to verify  that
$$T^-_{H_1,\eta,t}T^-_{H_2,\eta,t}u=T^-_{H_2,\eta, t}T^-_{H_1,\eta,t}u=T^-_{H_1+H_2, \eta,
t}u,$$ and $$ T^+_{H_1,\eta,t}T^+_{H_2,\eta,t}u
=T^+_{H_2,\eta,t}T^+_{H_1,\eta,t}u=T^+_{H_1+H_2, \eta, t}u$$ for any
$u \in C^0(M,\mathbb{R})$, any smooth closed 1-form $\eta$ on $M$
and $\mathbb{R} \ni t \geq 0$.
\end{rem}

\section{Proof of Theorem 6}

 Let $\eta$ be a smooth closed 1-form on $M$
with $[\eta]=c$. Choose $u \in \mathcal{S}^+_{H_1}(\eta)
(=\mathcal{S}^+_{H_2}(\eta))$, we will prove that
$T^-_{H_1,\eta,s}T^-_{H_2,\eta,r}u$ is a $C^{1,1}$ subsolution of
both Hamilton-Jacobi equations:
$$H_1(q, \eta+du)=\alpha_{H_1}(c)$$
and
$$H_2(q, \eta+du)=\alpha_{H_2}(c),$$
provided that $r,s$ are sufficiently small.

 Since $u$ is semi-convex,
so there exists $\epsilon_0
>0$ such that both  $T^-_{H_1,\eta,s}u$ and $T^-_{H_2,\eta,r}u$ are semi-convex
and semi-concave functions, hence both functions are $C^{1,1}$, in
the case that $s,r < \epsilon_0$ \cite{Be}. So, by the same
discussion, there exists $\epsilon_1
>0$ such that when $s,r <\epsilon_1$, we have $T^-_{H_1,\eta, s}T^-_{H_2,\eta, r}u$
is a $C^{1,1}$ function.

In the following, we will show that $T^-_{H_1,\eta,s}T^-_{H_2,\eta,
r}u$ is a subsolution of both Hamilton-Jacobi equations. Since
$\{H_1,H_2\}=0$, by the above lemma, we have $T^-_{H_1,\eta,
s}T^-_{H_2, \eta, r}u=T^-_{H_2,\eta, r}T^-_{H_1,\eta, s}u.$ Clearly,
$T^-_{H_2, \eta, r}u $ is a subsolution of $H_2(q,
\eta+du)=\alpha_{H_2}(c)$, and $T^-_{H_1, \eta, s}u $ is a
subsolution of $H_1(q, \eta+du)=\alpha_{H_1}(c).$

Now we need another useful lemma is proved in \cite{Fa3}:
\begin{lem}
Given a Lipschitz function $u: M \rightarrow \mathbb{R},$ the
following properties are equivalent:

$\bullet$ $ u$ is a subsolution of $H(q,\eta+du)=\alpha_H(c)$.

$\bullet$ The function $[0, +\infty) \ni t \rightarrow
T^-_{H,\eta,t}u(q)$ is non-decreasing for each $q \in M$.

$\bullet$ The function $[0, +\infty) \ni t \rightarrow
T^+_{H,\eta,t}u(q)$ is non-increasing for each $q \in M$.
\end{lem}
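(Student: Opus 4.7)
The plan is to show that all three conditions are equivalent to a single intermediate condition: that $u$ is dominated by $L_H - \eta + \alpha_H(c)$ in the sense introduced in Section~4, i.e., $u(\gamma(b)) - u(\gamma(a)) \le \int_a^b (L_H - \eta + \alpha_H(c))(\gamma,\dot\gamma)\,ds$ for every absolutely continuous curve $\gamma:[a,b]\to M$. The equivalence between being a viscosity subsolution of $H(q,\eta+d_q u) = \alpha_H(c)$ and being dominated in this sense, for a Lipschitz $u$, is a classical Fenchel-duality computation: at every differentiability point of $u$ (supplied by Rademacher's theorem) one has $(\eta_q + d_q u)(\dot v) - H(q,\eta_q+d_q u) \le L_H(q,\dot v) - \eta_q(\dot v) + \alpha_H(c)$ with equality at the matching velocity, and one integrates using the absolute continuity of $u\circ\gamma$; conversely one differentiates the dominated inequality along a Lipschitz curve at a differentiability point and chooses the velocity that realizes the Legendre equality. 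I would cite this equivalence from \cite{Fa3}.

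For the implication \emph{subsolution implies $t \mapsto T^-_{H,\eta,t}u(q)$ non-decreasing}: given domination, any absolutely continuous $\gamma:[0,t]\to M$ with $\gamma(t)=q$ satisfies $u(q) \le u(\gamma(0)) + \int_0^t (L_H - \eta + \alpha_H(c))(\gamma,\dot\gamma)\,ds$, so the defining infimum gives the baseline inequality $u \le T^-_{H,\eta,t}u$. To upgrade this to full monotonicity, I would use the semigroup identity $T^-_{H,\eta,t+h}u = T^-_{H,\eta,h}(T^-_{H,\eta,t}u)$ together with two standard facts from weak KAM theory: the Lax--Oleinik operator is order-preserving in its initial data, and if $u$ is a subsolution then so is $T^-_{H,\eta,t}u$. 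Applying the baseline inequality $v \le T^-_{H,\eta,h}v$ to the subsolution $v := T^-_{H,\eta,t}u$ yields $T^-_{H,\eta,t}u \le T^-_{H,\eta,t+h}u$ as required. Conversely, \emph{non-decreasing implies subsolution} is quicker: since $T^-_{H,\eta,0}u = u$, monotonicity forces $u \le T^-_{H,\eta,t}u$ for every $t \ge 0$; then for any absolutely continuous $\gamma:[0,t]\to M$, using $\gamma$ itself as a competitor in the infimum defining $T^-_{H,\eta,t}u(\gamma(t))$ gives $u(\gamma(t)) \le T^-_{H,\eta,t}u(\gamma(t)) \le u(\gamma(0)) + \int_0^t (L_H - \eta + \alpha_H(c))(\gamma,\dot\gamma)\,ds$, so $u$ is dominated, hence a subsolution by the first step.

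The $T^+$ equivalence is entirely parallel with reversed inequalities: domination immediately gives $T^+_{H,\eta,t}u(q) \le u(q)$ from the definition (any curve starting at $q$ yields $u(\gamma(t)) - \int_0^t(\cdots) \le u(q)$), the same semigroup plus order-preservation argument upgrades this to $t\mapsto T^+_{H,\eta,t}u(q)$ non-increasing, and the reverse implication uses a competitor curve issuing from $q$ to recover the domination inequality. The only delicate ingredient is the equivalence of the pointwise viscosity-subsolution condition with the global dominated-function condition for a merely Lipschitz $u$; everything else is a formal manipulation of the semigroup identity together with its order-preserving property, and I expect the Fenchel/Rademacher step to be the only one that requires genuine care.
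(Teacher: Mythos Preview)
The paper does not supply its own proof of this lemma: it is introduced with ``another useful lemma is proved in \cite{Fa3}'' and stated without argument. Your proposal is a correct sketch of the standard proof one finds in Fathi's monograph, so in that sense it agrees with what the paper defers to.

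One small streamlining: in the step where you upgrade the baseline inequality $u \le T^-_{H,\eta,t}u$ to full monotonicity, you invoke the fact that $T^-_{H,\eta,t}u$ is again a subsolution in order to apply the baseline inequality to $v := T^-_{H,\eta,t}u$. That fact is true, but you don't need it. Once you know $u \le T^-_{H,\eta,h}u$ for every $h \ge 0$, order-preservation of $T^-_{H,\eta,t}$ alone gives
\[
T^-_{H,\eta,t}u \;\le\; T^-_{H,\eta,t}\bigl(T^-_{H,\eta,h}u\bigr) \;=\; T^-_{H,\eta,t+h}u,
\]
which is the monotonicity you want. This avoids the extra ingredient and keeps the argument purely at the level of the semigroup identity and order-preservation.
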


Now we prove that  $T^-_{H_1,\eta,s}T^-_{H_2, \eta, r}u$ is a
subsolution of
$$H_1(q, \eta+du)=\alpha_{H_1}(c).$$
Clearly, we only need to show that $T^-_{H_2,\eta,r}u$ is a
subsolution of
$$H_1(q, \eta+du)=\alpha_{H_1}(c).$$
By Lemma 7.1, we just  need to show that $[0, +\infty) \ni s
\rightarrow T^-_{H_1,\eta,s}T^-_{H_2,\eta,r}u$ is non-decreasing for
each $q \in M$ and $r >0$. Since
$T^-_{H_1,\eta,s}T^-_{H_2,\eta,r}u=T^-_{H_2,\eta,r}T^-_{H_1,\eta,s}u,$
it follows from the following two facts:

1. $[0, +\infty) \ni s \rightarrow T^-_{H_1,\eta,s}u$ is
non-decreasing, since $u$ is a subsolution of
$$H_1(q, \eta+du)=\alpha_{H_1}(c);$$

2. $T^-_{H_2,\eta,r }$ has the monotony property, i.e., for each
$u,v \in C^0(M,\mathbb{R})$ and all $r >0$, we have
$$u \leq v \Rightarrow T^-_{H_2,\eta,r} u \leq T^-_{H_2,\eta,r} v.$$

Similarly, we have that $T^-_{H_1,\eta,s}T^-_{H_2,\eta,r}u$ is also
a subsolution of
$$H_2(q, \eta+du)=\alpha_{H_2}(c).$$

Theorem 6 follows.

\begin{ack}
We would like to thank Professor C.-Q. Cheng for encouragements and
helps for many years. This paper was completed when the first author
visited Albert-Ludwigs-Universit\"{a}t of Freiburg as a postdoctoral researcher, supported
by a fellowship from the Alexander Von Humboldt Foundation. The
first author would like to thank Professor V. Bangert and Mathematisches Institut at Albert-Ludwigs-Universit\"{a}t of Freiburg for their hospitality.
\end{ack}

\end{document}